\numberwithin{equation}{section}
\newtheoremstyle{fancy1}{10pt}{10pt}{\itshape}{12pt}{\textsc\bgroup}{.\egroup}{8pt}{
}
\newtheoremstyle{fancy2}{10pt}{10pt}{}{12pt}{\itshape}{.}{8pt}{ }
\theoremstyle{fancy1}
\newtheorem{lem}[equation]{Lemma}
\newtheorem{thm}[equation]{Theorem}
\newtheorem*{main*}{Theorem}
\newtheorem*{cor*}{Corollary}
\newtheorem*{problem*}{Problem}
\theoremstyle{fancy2}
\newtheorem*{rem*}{Remark}
\newcommand{\cref}[1]{Corollary~\ref{#1}}
\newcommand{\lref}[1]{Lemma~\ref{#1}}
\newcommand{\tref}[1]{Theorem~\ref{#1}}
\newcommand{\gl}{\lambda}
\newcommand{\gt}{\theta}
\newcommand{\e}{\epsilon}
\newcommand{\RP}{\mathbb{R\mkern1mu P}}
\newcommand{\CP}{\mathbb{C\mkern1mu P}}
\newcommand{\Sph}{\mathbb{S}}
\newcommand{\C}{{\mathbb{C}}}
\newcommand{\R}{{\mathbb{R}}}
\newcommand{\Z}{{\mathbb{Z}}}
\newcommand{\QH}{{\mathbb{H}}}
\newcommand{\G}{\ensuremath{\operatorname{G}}}
\newcommand{\SO}{\ensuremath{\operatorname{SO}}}
\renewcommand{\O}{\ensuremath{\operatorname{O}}}
\newcommand{\Sp}{\ensuremath{\operatorname{Sp}}}
\newcommand{\U}{\ensuremath{\operatorname{U}}}
\newcommand{\SU}{\ensuremath{\operatorname{SU}}}
\newcommand{\Pin}{\ensuremath{\operatorname{Pin}}}
\renewcommand{\S}{\ensuremath{\operatorname{S}}}
\newcommand{\fg}{{\mathfrak{g}}}
\newcommand{\fh}{{\mathfrak{h}}}
\newcommand{\fm}{{\mathfrak{m}}}
\newcommand{\fso}{{\mathfrak{so}}}
\newcommand{\fsu}{{\mathfrak{su}}}
\newcommand{\pro}[2]{\langle #1 , #2 \rangle}
\newcommand{\orth}[2]{\text{S(O(#1)O(#2))}}
\def\con#1=#2(#3){#1 \equiv #2 \bmod{#3}}
\newcommand{\ml}{\langle}                     % Riemannian metric (left )
\newcommand{\mr}{\rangle}                    % Riemannian metric (right)
\newcommand{\tr}{\ensuremath{\operatorname{tr}}}
\newcommand{\diag}{\ensuremath{\operatorname{diag}}}
\renewcommand{\Im}{\ensuremath{\operatorname{Im}}}
\newcommand{\Ad}{\ensuremath{\operatorname{Ad}}}
\renewcommand{\sec}{\ensuremath{\operatorname{sec}}}
 \DeclareMathOperator{\Real}{Re}
\DeclareMathOperator{\Id}{Id} 
\DeclareMathOperator{\spam}{span}
\newcommand{\Kpmo}{K_{\scriptscriptstyle{0}}^{\scriptscriptstyle{\pm}}}
\newcommand{\Kpo}{K_{\scriptscriptstyle{0}}^{\scriptscriptstyle{+}}}
\newcommand{\Kmo}{K_{\scriptscriptstyle{0}}^{\scriptscriptstyle{-}}}
\newcommand{\Kpm}{K^{\scriptscriptstyle{\pm}}}
\newcommand{\Kp}{K^{\scriptscriptstyle{+}}}
\newcommand{\Km}{K^{\scriptscriptstyle{-}}}
\newcommand{\Ko}{K_{\scriptscriptstyle{0}}}
\newcommand{\subo}{_{\scriptscriptstyle{0}}}
\newcommand{\no}{\noindent}
\newcommand{\co}{{cohomogeneity}}
\newcommand{\coo}{{cohomogeneity one}}
\newcommand{\com}{{cohomogeneity one manifold}}
\newcommand{\coms}{{cohomogeneity one manifolds}}
\newcommand{\coa}{{cohomogeneity one action}}
\newcommand{\coas}{{cohomogeneity one actions}}
\newcommand{\nnc}{non-negative curvature}
\newcommand{\pc}{positive curvature}
\newcommand{\psc}{positive sectional curvature}
\newcommand{\pcu}{positively curved}
\begin{document}
%\date{\today}

\title{On the geometry of cohomogeneity one manifolds with positive curvature}

\author{Wolfgang Ziller}
\address{University of Pennsylvania\\
   Philadelphia, PA 19104}
\email{wziller@math.upenn.edu}

\thanks{The  author was supported by  a grant from the
National Science Foundation, by IMPA in Rio de Janeiro and the Max
Planck Institute in Bonn, and would like to thank the Institutes for
their hospitality.}

\maketitle

There are very few known examples of manifolds with positive
sectional curvature. Apart from the compact rank one symmetric
spaces, they exist only in dimensions 24 and below  and are all
obtained as quotients of a compact Lie group equipped with a
biinvariant metric under an isometric group action.
 They
consist of certain homogeneous spaces in dimensions $6,7,12,13$ and
$24$ due to Berger \cite{Be}, Wallach \cite{Wa}, and Aloff-Wallach
\cite{AW}, and of biquotients in dimensions $6,7$ and $13$ due to
Eschenburg \cite{E1},\cite{E2} and Bazaikin \cite{Ba}.

When trying to find new examples, it is natural to search among
manifolds with large isometry group, a program initiated by K.Grove
in the 90's, see \cite{Wi} for a recent survey.  Homogeneous spaces
with positive curvature were classified in \cite{Wa},\cite{BB} in
the 70's. The next natural case to study is therefore manifolds on
which a group acts isometrically with one dimensional quotient, so
called \com s.  L.Verdiani \cite{V1,V2} showed  that in even
dimensions, positively curved \com s are equivariantly diffeomorphic
to an isometric action on a rank one symmetric space. In odd
dimensions K.Grove and the author observed in 1998 that there are
infinite families among the known non-symmetric positively curved
manifolds which admit isometric \coa s, and suggested a family  of
potential 7 dimensional candidates $P_k$. In \cite{GWZ} a
classification in odd dimensions was carried out and another family
$Q_k$ and an isolated manifold $R$ emerged in dimension $7$. It is
not yet known whether these manifolds admit a \coo\ metric with
positive curvature, although they all admit one with \nnc\ as a
consequence of the main result in \cite{GZ}.

In \cite{GWZ} the authors also discovered an intriguing connection
that the manifolds $P_k$ and $Q_k$ have with a family of self dual
Einstein orbifold metrics constructed by Hitchin \cite{Hi1} on
$\Sph^4$. They naturally give rise to 3-Sasakian metrics on $P_k$
and $Q_k$, which by definition have lots of positive curvature
already.

The purpose of this survey is three fold. In Section 2 we study the
positively curved \coo\ metrics on known examples with positive
curvature including the explicit  functions that define the metric.
In Section 3 we describe the classification theorem in \cite{GWZ}.
 It
is remarkable that among 7-manifolds where $G=\S^3\times\S^3$ acts
by \coo, one has the known positively curved Eschenburg spaces
$E_p$, the Berger space $B^7$, the Aloff-Wallach space $W^7$, and
the sphere $\Sph^7$, and that the candidates $P_k$, $Q_k$ and $R$
all carry such an action as well. We thus carry out the proof in
this most intriguing case where $G=\S^3\times\S^3$ acts by \coo\ on
a compact 7-dimensional simply connected manifold.
 In Section 4 we describe the relationship to
Hitchin's self dual Einstein metrics. We also discuss some curvature
properties of these Einstein metrics and the metrics they define on
$P_k$ and $Q_k$.  The behavior of these metrics, as well as the
known metrics with positive curvature, are illustrated in a series
of pictures.

\bigskip

\section{Preliminaries}
\label{general}

\smallskip

In this section we  discuss the basic structure of \coas\ and the
significance of the Weyl group. For more details we refer the reader
to  \cite{AA,Br,GZ,Mo}. We assume from now on that the manifold $M$
and the group $G$ that acts on $M$ are compact and will only
consider the most interesting case, where $M/G = I=[0,L]$. If
$\pi\colon M\to M/G$ is the orbit projection, the inverse images of
the interior points are the regular orbits and $B_- = \pi^{-1}{(0)}$
and $B_+ = \pi^{-1}{(L)}$ are the two non-regular orbits.  Choose a
point $x_- \in B_-$ and let $\gamma : [0,L] \to M$ be a minimal
geodesic from $B_-$ to $B_+$, parameterized by arc length, which we
can assume starts at $x_-$. The geodesic is orthogonal to $B_-$ and
hence to all orbits. Define $x_+=\gamma(L)\in B_+\; $,
$x_0=\gamma(\frac L 2)$ and let $\Kpm = G_{x_\pm}$ be the isotropy
groups at $x_\pm$ and $H = G_{x_0} = G_{\gamma(t)}$, $ 0<t<L $, the
principal isotropy group. Thus $B_\pm =G\cdot x_\pm= G/\Kpm$ and
$G\cdot\gamma(t)=G/H$ for $ 0<t<L $. For simplicity we denote the
tangent space of $B_\pm$ at $x_\pm$ by $T_\pm$ and its normal space
by $T_\pm^{\perp}$.

By the slice theorem, we have the following description of the
tubular neighborhoods $D(B_-)=\pi^{-1}([0,\frac L 2])$ and
$D(B_+)=\pi^{-1}([\frac L 2,L])$ of the nonprincipal orbits:
\begin{equation*}
\label{discbundle} D(B_{\pm}) = G\times_{K_{\pm}}D^{\ell_{\pm}} ,
\end{equation*}
\no where $D^{\ell_{\pm}}$ are  disks of radius $\frac L 2$ in
$T^{\perp}_\pm$. Here the action of $K_{\pm} $ on $G\times
D^{\ell_{\pm}}$ is given by $k\star (g,p)=(gk^{-1},kp)$ where $k$
acts on $D^{\ell_{\pm}}$ via the slice representation, i.e., the
restriction of the isotropy representation to $T_\pm^\perp$. Hence
we have the decomposition
\begin{equation*}
\label{decomp} M=D(B_-) \textstyle{\: \cup}_E \: D(B_+) \; ,
\end{equation*}
\no where $E = G\cdot x_0 = G/H$ is a principal orbit which is
canonically identified with the boundaries $\partial D(B_\pm) =
G\times_{K_{\pm}}\Sph^{\ell_{\pm}-1}$, via the maps $g\cdot H\to
[g,\dot{\gamma}(0)]$ respectively $g\cdot H\to
[g,-\dot{\gamma}(L)]$. Note also that $\partial D^{\ell_{\pm}} =
\Sph^{\ell_{\pm}-1} = \Kpm /H$ since the boundary of the tubular
neighborhoods must be a $G$ orbit and hence $\partial
D^{\ell_{\pm}}$ is a $\Kpm$ orbit.

All in all we see that we can recover $M$ from $G$ and the subgroups
$H$ and $\Kpm$.  We caution though that the isotropy types, i.e.,
the conjugacy classes of the isotropy groups $\Kpm $ and $ H$ do not
determine $M$. The isotropy groups  depend on the  choice of a
minimal geodesic between the two non-regular orbits, and thus on the
metric as well. A different choice of a minimal geodesic corresponds
to conjugating all isotropy groups by an element of $G$. A change of
the metric corresponds to changing $\Kp$ to $n\Kp n^{-1}$ for some
$n\in N(H)_{\subo}$, the identity component of the normalizer (cf.
\cite{AA,GWZ}).

\smallskip

An important fact about \coas\ is that there is a converse to the
above construction. Suppose $G$ is a compact Lie group with
subgroups $H\subset \Kpm \subset G$ and assume furthermore that
$\Kpm/H = \Sph^{\ell_{\pm}-1}$ are spheres. We sometimes denote this
situation by $H\subset \{\Km,\Kp\}\subset G$ and call it a group
diagram.  It is well known that a transitive action of a compact Lie
group $K$ on a sphere $\Sph^{\ell-1}$ is conjugate to a linear
action. We can thus assume that $\Kpm$ acts linearly on
$\Sph^{\ell_\pm}$ with isotropy group the chosen subgroup $H\subset
\Kpm $ at some point $p_\pm\in \Sph^{\ell_\pm-1}$. It hence extends
to a linear action on the bounding disk $D^{\ell_{\pm}}$ and we can
thus define a manifold
\begin{equation*}
\label{manifold} M = G\times_{\Km}D^{\ell_{-}} \cup_{\ G/H}
G\times_{\Kp}D^{\ell_{+}} ,
\end{equation*}
\no where we glue the two boundaries by sending $[g,p_-]$ to
$[g,p_+]$. The group
 $G$ acts on $M$   via $g\star [g',p]=[gg',p]$ on each half and one easily
checks that the gluing is $G$-equivariant, and that the action  has
isotropy groups $\Kpm $ at $[e,0]$ and $ H$ at $[e,p_\pm]$. One may
also choose an equivariant map $G/H\to G/H$ to glue the two
boundaries together. But such equivariant maps are given by $gH\to
gnH$ for some $n\in N(H)$ and the new manifold is alternatively
obtained by replacing $\Kp$ with $n\Kp n^{-1}$ in the group diagram.
But we caution that this new manifold may not be equivariantly
diffeomorphic to the old one if $n$ does not lie in the identity
component of $N(H)$.

\smallskip

Another important ingredient for understanding the geometry of a
\com\ is given by the Weyl group. The Weyl group $W$ of the action
is by definition the stabilizer of the geodesic $\gamma$ modulo its
kernel, which by  construction is equal to $H$. It is easy to see
(cf. \cite{AA}) that $W$ is a dihedral subgroup of $N(H)/H$ with
$M/G = \Im(\gamma)/W$, and is generated by involutions $w_{\pm} \in
W$ with $w_-(\gamma(t))=\gamma(-t)$ and
$w_+(\gamma(-t+L))=\gamma(t+L)$. Thus $w_+w_-$ is a translation by
$2L$, and has order $|W|/2$ when $W$ is finite. The involutions
$w_\pm$ can  be represented by the unique
 element $a\in\Kpmo$ mod $H$ with $av=-v$, where $\Kpm_v=H$.  If
 $\ell_\pm=1$, they are also the unique
 element $a\in\Kpmo$ mod $H$ such that
$a^2$ but not $a$ lies in $H$.
  For simplicity we denote such representatives
$a\in\Kpmo$ again by $w_\pm$.

Note that $W$ is finite if and only if $\gamma$ is a closed
geodesic, and in that case the order $|W|$ is the number of minimal
geodesic segments intersecting the regular part. In \cite{GWZ} it
was shown that a \com\ with an invariant metric of positive
curvature necessarily has finite Weyl group. Note also that any
non-principal isotropy group along $\gamma$ is of the form $w \Kpm
w^{-1}$ for some $w \in N(H)$ representing an element of $W$, and
that the isotropy types $\Kpm$ alternate along $\gamma$.

\smallskip

We now discuss how to describe \coo\ metrics on $M$. For $0<t<L$,
 $\gamma(t)$ is a regular
 point  with constant isotropy group $H$ and the metric on the
 principal
 orbits $G \gamma(t)=G/H$ is a family of homogeneous metrics
 $g_t$.
 Thus on the regular part
 the metric is determined by
$$g_{\gamma(t)}=d\,t^2+g_t,$$
and since the regular points are dense it also describes the metric
on $M$. Using a fixed biinvariant inner product $Q$ on $\fg$ we
define the $Q$-orthogonal splitting $\fg=\fh\oplus\fm$ which thus
satisfies $\Ad(H)(\fm)\subset\fm$. We  identify the tangent space to
$G/H$ at $\gamma(t), t\in(0,L)$ with $\fm$ via action fields:
$X\in\fm\to X^*(\gamma(t))$, which also identifies the isotropy
representation with the action of $\Ad(H)_{|\fm}$.  We can choose a
$Q$-orthogonal decomposition $\fm=\fm_1 + \cdots + \fm_k$ of $\fm$
into $\Ad(H)$ invariant irreducible subspaces and thus
$g_t{|\fm_i}=f_i(t)Q{|\fm_i}$ for some functions $f_1,\dots ,f_k$.
If the modules $\fm_i$ are inequivalent to each other, they are
automatically orthogonal and the functions $f_i$ describe the metric
completely. In positive curvature, it typically happens, as we will
see, that the modules are not orthogonal to each other and further
functions are necessary to describe their inner products. In order
for the metric on $M$ to be smooth, these functions must satisfy
certain smoothness conditions at the endpoints $t=0$ and $t=L$,
which in general can be complicated.  The action of $w_\pm$ on
$T_{x_\pm}M$ (well determined only up to $\Ad(H)$), preserves
$T_\pm$ and $T^{\perp}_\pm$
 and the action on $T_\pm$, given by
$\Ad(w_\pm)$, relates the functions describing the metric:
$(\Ad(w_-)(X))^*(\gamma(t))=X^*(\gamma(-t))$. If, e.g.,
$\Ad(w_-)(\fm_i)\subset \fm_i$, the function $f_i$ must be even, and
if $\Ad(w_-)(\fm_i)\subset\fm_j$, then $f_i(t)=f_j(-t)$. In fact
most, but not all, of the smoothness conditions at the endpoints can
be explained in this fashion by the action of the Weyl group.

\bigskip

\section{Known examples of cohomogeneity one manifolds with positive curvature}

\bigskip

In this section we describe the \coo\ actions on the known \com s
with \pc\, which were discovered by K.Grove and the author in 1998.
Apart from a \coo\ action by $\SU(4)$ on the infinite family of $13$
dimensional Bazaikin spaces $B^{13}_p$, which we will not discuss in
this survey, they are all \coo\ under an action of $\S^3\times\S^3$
or one of its finite quotients. We start with the well known action
of $\SO(3)$ on $\Sph^4$ and $\SO(3)$ on $\CP^2$ since they are
important in understanding the remaining examples and determine much
of their geometry. We then study the action of $\SO(4)$ on $\Sph^7$,
and of $\SO(4)$ on the Berger space $B^7=\SO(5)/\SO(3)$. This latter
action was also discovered by Verdiani-Podesta in \cite{PV2}. Of a
different nature is the action of $\SU(2)\times\SO(3)$ on the
infinite family of Eschenburg biquotients $E_p$, which contains as a
special case a homogeneous Aloff-Wallach space. We finish with a
second \coo\ action on the same Aloff-Wallach space which shares
some features of both actions. To distinguish them, we denote the
first one by $W^7_{(1)}$ and the second by  $W^7_{(2)}$. For a
survey of the known examples of \pc, see \cite{Zi2}.

All actions described here are by groups locally isomorphic to
$\S^3$ or $\S^3\times\S^3$. For comparison we will describe them
ineffectively so that $G=\S^3$ or $G=\S^3\times\S^3$ acts on the
manifold. The effective version of the action, which we denote by
$\bar{G}$, is obtained by dividing $G$ by the ineffective kernel,
which is the intersection of the center of $G$ with the principal
isotropy group $H$.

\bigskip

\begin{center} $M=\Sph^4$ or $\CP^2$ \text{ with } $\bar{G}=\SO(3)$.
\end{center}

\bigskip

We begin
 by describing the well known \co\ one action by \SO(3) on
$\Sph^4$.
 Let $V$ be the 5-dimensional vector space of real $ 3\times 3$
matrices with $A=A^t , \tr (A)=0$ and with inner product $\pro{A}{B}
= \tr AB$. The group $\SO(3)$ acts on $V$ via conjugation $g\cdot
A=gAg^{-1}$ preserving the inner product and hence acts on
$\Sph^4(1) \subset V$. Every point in $\Sph^4(1)$ is conjugate to a
matrix in the great circle $F = \{\diag(\gl_1,\gl_2,\gl_3) \mid
 \sum \gl_i =0 , \sum \gl_i^2 = 1\}$ and hence the quotient space is
 one dimensional.
 For the purpose of computations, we choose an orthonormal basis
 \begin{equation}\label{basis}
 e_1=\frac{\diag(1,1,-2)}{\sqrt{6}}\; ,\; e_2=\frac{\diag(1,-1,0)}{\sqrt{2}}
 \; ,\;
 e_3=\frac{S_{12}}{\sqrt{2}}\; ,\; e_4=\frac{S_{13}}{\sqrt{2}}\; ,\;
 e_5=\frac{S_{23}}{\sqrt{2}},
 \end{equation}
where $S_{ij}$ is a symmetric matrix with a one in entries $ij$ and
$ji$ and 0 everywhere else. If we choose $x_-=e_1$, then clearly
$\Km=\S(\O(2)\O(1))$ and the orbit $G/\Km$ is the set of all
symmetric matrices with 2 equal positive eigenvalues. Furthermore,
$T_-=\spam(e_4,e_5)$ with $T_-^\perp=\spam(e_2,e_3)$ and thus
$\gamma(t)=\cos(t)e_1+\sin(t)e_2$ is a geodesic orthogonal to $B_-$
and hence to all orbits. Clearly
$x_+=\gamma(\pi/3)=\diag(2,-1,-1)/\sqrt{6}$ is the first point along
the geodesic $\gamma$ which lies on the second singular orbit,
consisting of the set of all symmetric matrices with 2 equal
negative eigenvalues. Thus $L=\pi/3$ and $\Kp=\S(\O(1)\O(2))$. For
$\gamma(t)$ with $0<t<\frac \pi 3$, all eigenvalues $\lambda_i$ are
distinct and hence the principal  isotropy group is
$H=\text{S(O(1)O(1)O(1))} = \Z_2\times \Z_2$.

\smallskip

 If we denote by
$E_{ij}$ the usual basis of the set of skew symmetric matrices, the
above action of $\SO(3)$ on $\Sph^4(1)$ induces 3 action fields
$E^*_{ij}$. A computation shows that:
$$E^*_{12}=2\sin(t)e_3\; , \;
E^*_{23}=(\sqrt{3}\cos(t)-\sin(t))e_5\; , \;
E^*_{13}=(\sqrt{3}\cos(t)+\sin(t))e_4.$$  For the  functions $f_1=|E^*_{12}|^2 , \;
f_2=|E^*_{23}|^2, \; f_3=|E^*_{13}|^2$, which describe the metric,
we thus obtain:
$$f_1=4\sin^2(t) , \; f_2=(\sqrt{3}\cos(t)-\sin(t))^2 , \;
f_3=(\sqrt{3}\cos(t)+\sin(t))^2,$$ and all other inner products are
$0$.

\smallskip

For later purposes it will be convenient to  lift the isotropy
groups into $\S^3$ under the two-fold cover $\S^3=\Sp(1) \to
\SO(3)$, given by conjugation on $\QH$, which sends $q\in \Sp(1)$
into a rotation in the 2-plane $\Im(q)^\perp\subset \Im(\QH)$ with
angle $2\gt$, where $\gt$ is the angle between $q$ and $1$ in
 $\S^3$.
After renumbering the coordinates, the group $\Km$ lifts to
$\Pin(2)=\{e^{i\gt}\mid\gt\in\R\} \cup \{je^{i\gt}\mid\gt\in\R\}$
which we abbreviate to $e^{i\gt} \cup je^{i\gt}$. Similarly, $\Kp$
lifts to $\Pin(2)=e^{j\gt} \cup ie^{j\gt}$, and $H=
$S(O(1)O(1)O(1))$\subset \SO(3)$ lifts to the
 quaternion group $Q=\{\pm 1, \pm i,\pm j,\pm k\}$.
Thus the \com\ $\Sph^4$ can also be represented by the group diagram
$$ Q \subset\{e^{i\gt} \cup je^{i\gt} ,e^{j\gt} \cup ie^{j\gt}\}\subset\S^3.$$

We will now discuss the Weyl symmetry using this group picture.
Clearly $w_-=e^{i\frac \pi 4}$ since $e^{i\frac \pi 2}=i$ lies in
$H$ and similarly $w_+=e^{j\frac \pi 4}$. Thus $a=w_+w_-=\frac 1 2
(1+i+j+k)$ represents a translation by $2L$ along the geodesic and
since $a^3=-1\in H$, the Weyl group is $W=D_3$. This is consistent
with the fact that the angle between $x_-$ and $x_+$ is $\pi/3$ and
hence $\gamma(t)$ intersects the regular part in $6=|W|$ components.
Notice now that $aia^{-1}=j , aja^{-1}=k$, and $ aka^{-1}=i$. Thus
$f_1(t+2L)=f_2(t)$ and $f_1(t+4L)=f_3(t)$. By applying only $w_+$ at
$t=L$, we also obtain $f_3(t)=f_1(-t+2L)$. Thus the function
$f_1(t)$ on the interval $[0,3L]$ determines the full geometry of
the \com:
\begin{equation}\label{Weyl}
f_2(t)=f_1(t+2L)\; , \; f_3(t)=f_1(t+4L)=f_1(-t+2L)\quad,\quad 0<t<L
 \end{equation}

\smallskip

 There is a related \coa\ by $\SO(3)\subset\SU(3)$ on $\CP^2$ which
 has singular orbits the real points $B_-=\RP^2\subset\CP^2$ and the
 quadric $B_+=\Sph^2=\{[z_0,z_1,z_2]\mid \sum z_i^2=0\}$.
Here we use the metric on $\CP^2$ induced by the biinvariant metric
$\ml A,B\mr = -\frac 1 2 \Real \tr AB$ on $\SU(3)$, which has
curvature $1\le \sec\le 4$.
  One
 easily shows
that the unit speed geodesic $\gamma(t)$, given in homogeneous
coordinates by $[(\cos(t),i\sin(t),0)]$, is orthogonal to all orbits
and that the isotropy group at $x_-=\gamma(0)$ is $\Km=\orth{1}{2}$,
at $x_+=\gamma(\pi/4)$ is $\Kp=\SO(2)$, embedded  in the first two
coordinates, and that $H=G_{\gamma(t)}=\Z_2=
\langle\diag(-1,-1,1)\rangle$ for $0<t<\pi/4$. Hence $L=\pi/4$ and
the group diagram, lifted to $\S^3$, is  given by:
$$ \{\pm 1,\pm j\} \subset\{e^{i\gt}\cup je^{i\gt} ,e^{j\gt} \}\subset\S^3.$$
Thus  a projection along the orbits gives rise to a two fold
branched
 cover $\CP^2\to\Sph^4$ with branching locus the singular orbit
 $\RP^2=\S^3/(e^{i\gt} \cup je^{i\gt}) $. One easily shows that
 the functions describing
 the metric are given by
 $$f_1=\sin^2(t) , \; f_2=\cos^2(2t) , \;
f_3=\cos^2(t).$$ The Weyl group symmetry changes since $w_-=i $ but
$w_+=e^{j\frac \pi 4}$ and hence $W=D_2$. The functions are thus
related by \begin{equation}\label{Weyl2}
f_3(t)=f_1(t+2L)=f_1(-t+2L)\; , \; f_2(t)=f_2(-t)=f_2(-t+2L).
\end{equation}
Hence in this case the functions $f_1$ and $f_2$ on $[0,2L]$
determines the full geometry of the \com.

\smallskip

We finally mention the \coo\ action by $\SU(2)$ on $\CP^2$ which
fixes a point $p_0$. The second singular orbit is then the cut locus
of $p_0$ and hence $L=\pi/2$ in this case.

\bigskip

%\newpage

\begin{center} $M=\Sph^7$ \text{ with } $\bar{G}=\SO(4)$.
\end{center}

\bigskip

The action of $\SO(4)$ on $\R^8$ which induces a \coo\ action on
$\Sph^7$ is given by the isotropy representation of the rank 2
symmetric space $\G_2/\SO(4)$ on its tangent space. As a complex
representation, it is the representation of $\SU(2)\times \SU(2)$
obtained by taking the tensor product of the unique 2-dimensional
irreducible representation of $\SU(2)$ with the 4-dimensional one on
the second factor. Thus the first $\SU(2)$ factor acts as the Hopf
action on $\Sph^7$ and the second factor induces an action by
$\SO(4)/\SU(2)\simeq \SO(3)$ on $\Sph^7(1)/\SU(2)=\Sph^4(\frac 1
2)$. This action of $\SO(3)$ on $\R^5$ is irreducible since the
representation of $\SU(2)\times \SU(2)$ on $\R^8$ is irreducible.
Thus it agrees with the \coo\ action of $\SO(3)$ in the previous
example and hence the $\bar{G}$ action on $\Sph^7$ is \co\ as well.
Since $\SU(2)$ acts freely, this implies in particular that both
actions have isomorphic isotropy groups, i.e. $\Kpm\simeq\O(2)$ and
$H\simeq\Z_2\oplus\Z_2$ considered as subgroups of $\SO(4)$. We now
need to determine their explicit embeddings into $\SO(4)$
respectively $\S^3\times\S^3$.

 If we let $V_k$ be the vector space of homogeneous
polynomials of degree $k$ in two complex variables $z,w$, then
$\SU(2)$ acting on vectors $(z,w)$ via matrix multiplication,
 induces an irreducible representation on $V_k$ of (complex)  dimension $k+1$
 and preserves the inner product which makes $z^mw^n$ into an orthogonal basis
  with  $|z^mw^n|^2=m!n!$.
 The isotropy representation (complexified) is thus
 $V_1\otimes V_3$. The map $(z,w)\to (w,-z)$, extended to be a
 complex antilinear map $J_k\colon V_k\to V_k$, satisfies
 $J_k^2=(-1)^k\Id$ and hence $(J_1\otimes J_3)^2=\Id$. Thus $J_1\otimes J_3$ induces a real
 structure on $\C^8$ and hence its $+1$ eigenspace $W$ is invariant
 under the action of $G=\SU(2)\times\SU(2)$, and is spanned by:
 \begin{align*}&xz^3+yw^3\; , \;   i(xz^3-yw^3) \; , \; xzw^2+ywz^2\; , \;
 i(xzw^2-ywz^2),\\
 &yz^3-xw^3\; , \;   i(yz^3+xw^3) \; , \; xz^2w-yw^2z\; , \;
  i(xz^2w+yw^2z),
  \end{align*}
  which is our desired representation of $G$ on $\R^8$.

  Now let $\Delta Q$ be the diagonal embedding of the quaternion
  group into $\SU(2)\times\SU(2)=\S^3\times\S^3 $ , i.e.
  $\Delta Q=\{\pm(1,1),\pm(i,i),\pm(j,j),\pm(k,k)\}$. Here we identify
  $a+bj\in \S^3$ with ${\tiny \left(
                 \begin{array}{cc}
                   a    & b \\
                   -\bar{b} & \bar{a} \\
                 \end{array}
               \right)\in\SU(2)  }$.
   One easily
  checks that    $\Delta Q$ fixes the two plane spanned by the
  orthonormal vectors $a=(xz^3+yw^3)/2\sqrt{3}\; , \;
  b=(xzw^2+ywz^2)/2$.
    By the above, this is then also the principal isotropy
  group $H$ since the image of $\Delta Q$ in $\SO(4)$ is $\Z_2\oplus\Z_2$.
   The great circle $\gamma(t)=\cos(t)a+\sin(t)b$  in this
  2-plane meets all orbits orthogonally since it agrees with the
  fixed point set of $H$ on $\Sph^7$. Now one easily checks that
  $x_-=\gamma(0)$ is fixed by the circle $(e^{-3it},e^{it})\subset
  \S^3\times\S^3 $ and hence $\Km=(e^{-3it},e^{it}) \cdot H=
  (e^{-3it},e^{it})\cup (j,j)\cdot(e^{-3it},e^{it})$. The first
  singular point along $\gamma$ occurs at $x_+=\gamma(\pi/6)$ since
  the
projection of $\gamma$ is a normal geodesic in the \com\
$\Sph^4(\frac 1 2 )$. Thus $L=\pi/6$ and a computation shows that
$\gamma(\pi/6)$ is fixed by the circle $(e^{jt},e^{jt})
  $ and hence $\Kp=
  (e^{jt},e^{jt})\cup (i,i)\cdot(e^{jt},e^{jt})$.    Thus the
  group picture is given by
  $$
   H=\Delta Q \subset\{ (e^{-3it},e^{it})\cdot H \;
   ,\;
   (e^{jt},e^{jt})\cdot H
   \}\subset\S^3\times\S^3.
  $$

  We identify the Lie algebra $\fg$ with $\Im\QH\oplus\Im\QH$ and
  will use the basis $X_1=(i,0)\; , \;X_2=(j,0)\; , \;X_3=(k,0)$ and
$Y_1=(0,i)\; , \;Y_2=(0,j)\; , \;Y_3=(0,k)$ of $\fg$ and define
$f_i(t)=|X_i^*(\gamma(t))|^2\ , g_i(t)=|Y_i^*(\gamma(t))|^2 ,
h_i(t)=\langle X_i^*(\gamma(t)),Y_i^*(\gamma(t))\rangle$. Using the
above action of $\S^3\times\S^3$ applied to $\gamma(t)$, we obtain
the action fields
\begin{align*}
X_1^*(\gamma(t))&=i\cos(t)\frac{xz^3-yw^3}{2\sqrt{3}}
+i\sin(t)\frac{xzw^2-ywz^2}{2},\\
Y_1^*(\gamma(t))&=i\cos(t)\frac{3xz^3-3yw^3}{2\sqrt{3}}
+i\sin(t)\frac{-xzw^2+ywz^2}{2} ,\end{align*} and thus
$$f_1=1\; , \; g_1=8\cos^2(t)+1\; , \;
h_1=4\cos^2(t)-1.$$ As in the case of $\Sph^4$, the remaining
functions  are now determined via Weyl symmetry. We have
$w_-=e^{i\frac \pi 4}(-1,1)$
 and  $w_+=e^{j\frac
\pi 4}(1,1)$ and thus $a=w_+w_-=\frac 1 2 (1+i+j+k)(-1,1)$ with
$a^3=(-1,1)$. Hence $W=D_6$ corresponding to the fact that $\gamma$
meets $B_+$ at $t=\pi/6$ and hence intersects the regular part in 12
components. Conjugation with $a$ behaves as in the case of $\Sph^4$
on each component and hence $f_i$, as well $g_i$ and $h_i$, satisfy
the symmetry relations \eqref{Weyl}. Finally, we observe that all
remaining inner products are necessarily $0$ since the actions of
the isotropy group $\Delta Q$ on the 3 subspaces $\spam\{ X_i,Y_i\},
i=1,2,3$ are inequivalent to each other.

\bigskip

\begin{center} $M=B^7$ \text{ with } $\bar{G}=\SO(4)$.
\end{center}

\bigskip

As we saw in our first example, $\SO(3)$ acts orthogonally on the
vector space $V$, consisting of the set of traceless symmetric
$3\times 3$ matrices, via conjugation and hence isometrically on
$\Sph^4$. This gives  rise to an embedding $\phi\colon
\SO(3)\to\SO(5)$ and  defines a homogeneous space
$B^7=\SO(5)/\SO(3)$, also known as the Berger space. Berger showed
in \cite{Be} that a biinvariant metric on $\SO(5)$ induces a metric
on $B^7$ with \psc.

The subgroup $\SO(4)\subset\SO(5)$ acts on $B_7$
 via left multiplication and we claim it is \coo.
 Using the basis \eqref{basis} from Example 1, we let
$\SO(4)=\SO(5)_{e_1}$ be the subgroup fixing $e_1$.
 The isotropy groups
are then given by $\SO(4)_{g\SO(3)}=\SO(4)\cap
g\SO(3)g^{-1}=g(\SO(3)_{g^{-1}e_1})g^{-1}$. Hence it follows from
our first example that $\Kpm\simeq\O(2)$ and $H\simeq\Z_2\oplus\Z_2$
and thus the action is \coo. We now need to compute the explicit
embeddings of $\Kpm$ in $\SO(4)$ respectively $\S^3\times\S^3$.

To avoid confusion, we let $E_{ij}$ be the basis of skew symmetric
matrices in $\SO(3)$ and $F_{ij}$ the one in $\SO(5)$. If we set
$\phi_*(E_{12})=H_1\; , \; \phi_*(E_{23})=H_2\; ,
\;\phi_*(E_{13})=H_3$, one easily shows, using the explicit
description of the action of $\SO(3)$ on $V$,  that
$$H_1=2F_{23}+F_{45} \; , \;
H_2=F_{34}+\sqrt 3 F_{15}-F_{25}\; , \; H_3=F_{35}+\sqrt 3
F_{14}+F_{24}.$$ Thus $H_i$ defines an orthogonal basis of the Lie algebra of
$\phi(\SO(3))$ with $|H_i|^2=5$.

 For the point $x_-$ we choose $x_-=e\cdot
\SO(3)$,  the identity coset in $\SO(5)/\SO(3)$. The group
$\Kmo\simeq\SO(2)$ then acts by rotation with angle $2\theta$ in the
$e_2,e_3$ plane and angle $\theta$ in the $e_4,e_5$ plane since
$\phi_*(E_{12})=2F_{23}+F_{45}$, which determines its embedding into
$\SO(4)$. For $\Kp$ we need to follow a normal geodesic. Clearly,
$F_{12}$ and $ F_{13}$ are orthogonal to $H_i$ and  the orbit of
$\SO(4)$ and hence lie in the normal space of $B_-$. In $B^7$, being
normal homogeneous, a geodesic is the image of a one parameter group
with initial vector orthogonal to $H_i$. Thus   we can let
$\gamma(t)=\exp(tF_{12})\cdot\SO(3)=(\cos(t)e_1+\sin(t)e_2)\cdot\SO(3)$
be the geodesic orthogonal to all orbits. From Example 1 it follows
that the isotropy at $\gamma(t)$ is isomorphic to $\Z_2\oplus\Z_2$
for $0<t<\pi/3$ and to $\O(2)$ at $\gamma(\pi/3)$ and thus
$L=\pi/3$. If $g=\exp(\frac \pi 3 F_{12})$, we have $g^{-1}e_1=\frac
1 2 e_1+\frac{ \sqrt{3}}{ 2} e_2=\diag(2,-1,-1)/\sqrt{6}$ and hence
$\Kp=g(\S(\O(1)\O(2))g^{-1}$. From the embedding $\phi$ it is clear
that $\SO(2)\subset \S(\O(1)\O(2)\subset\SO(3)$ fixes $g^{-1}e_1$
and rotates by $\theta$ in the $e_3,e_4$ plane and by $2\theta$ in
the plane spanned by $\frac{ \sqrt{3}}{ 2} e_1-\frac 1 2 e_2$ and
$e_5$. Conjugating with $g$ gives a rotation that fixes $e_1$,
rotates by $\theta$ in the $e_3,e_4$ plane and by $-2\theta$ in the
$e_2,e_5$ plane.

We now lift these groups into $G=\S^3\times\S^3$ using the
identification $e_2\leftrightarrow 1\; , \; e_3\leftrightarrow i\; ,
\; e_4\leftrightarrow j\; , \; e_5\leftrightarrow k$ and the 2-fold
cover $\S^3\times\S^3\to\SO(4)$ given by left and right
multiplication of unit quaternions. It sends $X_1=(i,0)\to
F_{23}+F_{45}\; , \; Y_1=(0,i)\to -F_{23}+F_{45}$ and similarly for
$X_i,Y_i,i=2,3$. Thus, after renumbering the coordinates, it follows
that $\Kmo=(e^{-3it},e^{it})$ and $\Kpo=(e^{jt},e^{-3jt})$. For the
group picture to be consistent, we are left with:
$$
   \Delta Q \subset\{ (e^{-3it},e^{it})\cdot H\;
   ,\;
   (e^{jt},e^{-3jt})\cdot H
   \}\subset\S^3\times\S^3.
  $$

In order to determine the functions describing the metric, let
$\bar{X}_i,\bar{Y}_i$ be the action fields on $\SO(5)$ and
$X^*_i,Y^*_i$ those on $\SO(5)/\SO(3)$. To compute their length at
$\gamma(t)$, we translate them back to the identity with the
isometric left translation by $\exp(tF_{12})^{-1}$. We thus obtain:
$$\bar{X}_1(t)=\Ad(-\exp(tF_{12}))(F_{23}+F_{45})=-\sin(t)F_{13}+\cos(t)F_{23}+F_{45},$$
$$\bar{Y}_1(t)=\Ad(-\exp(tF_{12}))(-F_{23}+F_{45})=\sin(t)F_{13}-\cos(t)F_{23}+F_{45}.$$

Since $X^*_1=\bar{X}_1-\frac 1 5 \langle \bar{X}_1,H_1\rangle H_1 =
\bar{X}_1- \frac 1 5 (2\cos(t)+1)H_1$ and $Y^*_1=\bar{Y}_1- \frac 1
5 (-2\cos(t)+1)H_1$ we have:
$$f_1=\frac 1 5 (5+4\sin^2(t)-4\cos(t))\; , \; g_1=\frac 1 5 (5+4\sin^2(t)+4\cos(t))
\; , \; h_1=-\frac 1 5 (1-4\cos^2(t)).$$

The remaining functions are determined by Weyl group symmetry.
Similarly to the example of $\Sph^7$, we see that $w_-=e^{i\frac \pi
4}(1,1)$
 and  $w_+=e^{j\frac
\pi 4}(1,1)$ and hence $(w_+w_-)^3=(-1,-1)\in H$. Thus in this case
$W=D_3$. But conjugation by $w_+w_-$ behaves as before and hence all
functions satisfy the same Weyl symmetry as in \eqref{Weyl}.

\bigskip

It is now interesting to compare the metrics in these two examples,
which we do in a sequence of pictures. Figure 1 shows all 9
functions between two singular orbits, clearly not very instructive.
Figure 2 illustrates the effects of Weyl symmetry in these pictures
in a typical case of the 3 $g$ functions for the Berger space. Thus
Figure 3, which shows $f=f_1 ,\, g=g_1 ,\, h=h_1$ on $[0,3L]$,
encodes all the geometry of the space. As was discovered by K.Grove,
B.Wilking and the author, the  positivity of the sectional
curvatures $\sec(\gamma'(t),X^*)\; , X\in\fg$ implies that the
inverse of the metric tensor is a convex matrix. Figure 4 shows the
functions $F_1,G_1,H_1$ in the inverse of ${\tiny \left(
                 \begin{array}{cc}
                   f_1    & h_1 \\
                   h_1 & g_1 \\
                 \end{array}
               \right)  }$
               on the interval $[0,3L]$.
Smoothness conditions are now encoded in the behavior of the
functions as $t\to 0$ and $t\to 3L$.

\bigskip

\begin{center} $M=E^7_p$ \text{ with } $\bar{G}=\SO(3)\times\S^3$.
\end{center}

\bigskip

Next, we examine a family of biquotients among the 7-dimensional
Eschenburg spaces. Define
$$E_p :=\SU(3)/\!/\S^1_p=  \diag(z, z,
z^{p})\backslash \SU(3)/ \diag(1, 1, z^{p+2})^{-1},$$ where it is
understood that $\S^1=\{z\mid |z|=1\}$ acts on $\SU(3)$
simultaneously on the left and on the right. Up to equivalence, we
can assume that $p\ge 0$ and Eschenburg showed that it admits a
metric with \psc\ if $p\ge 1$. The positively curved metric is
obtained by scaling the biinvariant metric $\ml A,B\mr = -\frac 1 2
\Real \tr AB$ on $\SU(3)$ in direction of the subgroup
$\diag(A,\det\bar{A})\; , A\in \U(2)$ by an amount $\e<1$. The group
$G=\SU(2)\times\SU(2)$ acts on $E_p$ by multiplying on the left and
on the right in the first two  coordinates since it clearly commutes
with the circle action, and we claim this action is \coo. Indeed, we
can first divide by the second $\SU(2)$ and the action of the first
$\SU(2)$, since $(\{e\}\times\SU(2))\cdot\S^1_p=\U(2)$, is then an
action on $\SU(3)/\U(2)=\CP^2$ which fixes the identity coset
$e\cdot\U(2)$ and acts transitively on the normal sphere. Thus this
action on $\CP^2$, and hence also the action of $G$ on $E_p$, is
\coo.  Notice that if $p$ is even, the action becomes effectively an
action by $\SO(3)\times\SU(2)$, whereas if $p$ is odd, by
$\SU(2)\times\SO(3)$.

One singular point is clearly the image of the identity matrix,
$x_-=e\cdot \S^1_p$,  with $\Km=\{(g,\pm g)\mid g\in\SU(2)\}$. One
easily checks that in the modified metric on $\SU(3)$ the one
parameter group $\exp(tE_{13})$, being orthogonal to $\U(2)$, is
still a (unit speed) geodesic in $\SU(3)$ (see, e.g., \cite{DZ}).
Since it is also orthogonal to the orbit of $G$ at $x_-$, its
projection $\gamma(t)$ into $\SU(3)/\!/\S^1_p$ is a geodesic
orthogonal to all orbits. Its projection to $\CP^2$ is also a normal
geodesic and the induced $\SU(2)$ action has isotropy $\SU(2)$ at
$t=0$ and isotropy $\S^1$ at $t=\pi/2$ and the principal isotropy is
trivial. Thus the same holds for the action of $\bar{G}$ on $E_p$,
in particular $L=\pi/2$. For the explicit embeddings, one easily
checks (\cite{Zi1},\cite{GSZ}) that the isotropy group of $G$ at
$x_+=\gamma(\pi/2)$ is equal to $\Kp=(e^{i(p+1)t},e^{ipt})$.  Hence
we obtain the group diagram:
$$
   \Z_2=((-1)^{p+1},(-1)^{p})\subset \{ \Delta\S^3\cdot H \;
   ,\; (e^{i(p+1)t},e^{ipt})\}\subset\S^3\times\S^3.
  $$

  To compute the functions describing the metric, we identify
  $\S^3\times\S^3$ as before with $\SU(2)\times\SU(2)$ and translate
  the action fields at $\gamma(t)$ back to the identity. We then
  obtain:
  \begin{align*}
  \bar{X}^*_1&=\Ad(\exp(-tE_{13}))\diag(i,-i,0)=\diag(i\cos^2(t),-i,i\sin^2(t))
  +\cos(t)\sin(t)I_{13},\\
  \bar{X}^*_2&=\Ad(\exp(-tE_{13}))E_{12}=\cos(t)E_{12}-\sin(t)E_{23},\\
  \bar{X}^*_3&=\Ad(\exp(-tE_{13}))I_{12}=\cos(t)I_{12}+\sin(t)I_{23},\\
  \bar{Y}^*_1&=-\diag(i,-i,0)\; , \; \bar{Y}^*_2=-E_{12}\; , \;
  \bar{Y}^*_3=-I_{12},
  \end{align*}
  where we used the notation $I_{kl}$ for a matrix in $\fsu(3)$ with
  $i$ in entry $kl$ and $lk$ and $0$ elsewhere. The
  vertical space of the Riemannian submersion
  $\pi\colon\SU(3)\to\SU(3)/\!/\S^1_p$ (translated back to the identity) is spanned by
  \begin{align*}v&=\Ad(\exp(-tE_{13}))i\diag(1,1,p)-i\diag(0,0,p+2)\\
  &=
  i\diag(\cos^2(t)+p\sin^2(t),1, \sin^2(t)+p\cos^2(t)-p-2)
  +(1-p)\cos(t)\sin(t)I_{13},
  \end{align*}
  whose length in the Eschenburg metric is
  $$|v|^2=3\e + (1-p)^2(1-\e)\cos^2(t)\sin^2(t)+\e(p+2)(p-1)\sin^2(t).$$

  Notice that $\bar{X}_2,\bar{X}_3,\bar{Y}_2,\bar{Y}_3$ are already
  horizontal with respect to the Riemannian submersion $\pi$ but that we need to subtract the vertical component
  from $\bar{X}_1$ and $\bar{Y}_1$. A computation now shows that:
\begin{align*} f_1&= \frac \epsilon {4\alpha}\left[
\left(3\epsilon(p-2)^2-4p^2+8p-16\right) \cos^4 (t)+\right.\\
&\hspace{180pt}+ \left.\left(4p^2-8p+16-6\epsilon p(p-2)\,\right)
\cos^2(t)+3\epsilon
p^2\right],\\
g_1&= \frac \epsilon {4\alpha}\left[ (p-1)^2(3\epsilon-4) \cos^4
(t)+ (2p-2)(2p-2-3\epsilon(p+1)\, ) \cos^2(t)+3\epsilon
(p+1)^2\right],\\
h_1&=- \frac \epsilon {4\alpha}\left[(p-1)(3\epsilon(p-2)-4p+4)
\cos^4 (t)+ \right.\\
&\hspace{180pt} + \left.(4(p-1)^2-6\epsilon(p^2-p-1)\,)
\cos^2(t)+3\epsilon
p(p+1)\right],\\
&\hspace{40pt}f_2=f_3= 1 + (\epsilon - 1)\cos^2(t)  \; , \; g_2=g_3=
\epsilon \; , \; h_2=h_3= -\epsilon \cos(t).
\end{align*}

\no where $\alpha=(p-1)^2(\epsilon-1) \cos^4(t)+
(p-1)(p-1-\epsilon(2p+1)\,) \cos^2(t) +\epsilon (p^2+p+1)$ and all
other inner products are $0$.

\smallskip

Notice that $p=1$ is a  special case since the Eschenburg space is
now simply the homogeneous Aloff-Wallach space
$W^7=\SU(3)/\diag(z,z,\bar{z}^2)$ and the functions  are given by:
\begin{align*}f_1&=\frac 1 4 \left[(\epsilon-4)
\cos^4(t)+2(\epsilon+2)\cos^2(t)+\epsilon \right]\; , \; f_2=f_3= 1
+ (\epsilon - 1)\cos^2(t),\\
 g_1&= g_2=g_3= \epsilon \; , \;h_1=-\frac \epsilon 2 (\cos^2(t)+1)
\; , \; h_2=h_3= -\epsilon \cos(t).
\end{align*}

A major difference with the previous two cases  lies in the Weyl
group. Clearly $w_-=(-1,-1)$ and $w_+=(i^{p+1},i^p)$. Thus
$(w_+w_-)^2\in H$ and hence $W=D_2$. The Weyl group elements
multiply each of the natural basis vectors in $T_{p_\pm}B_\pm$ with
$\pm 1$ and hence Weyl group symmetry simply says that all
(non-collapsing) functions must be even at $t=0$ and at $t=L$, in
particular their first derivatives must vanish. Thus any of the
relationships between different functions that was so useful in the
previous cases is lost. Also, notice that, unlike in the previous
two examples, the vanishing of the remaining inner products is not
forced  anymore by the action of the isotropy group since it acts
trivially on $G/H$. The basic behavior of the functions is
illustrated in Figure 5  for the Aloff-Wallach space and the
Eschenburg space with a typical value of $p=10$ and $\e=\frac 1 2$.
Here we have drawn the graphs on $[0,4L]$, i.e., once around the
closed geodesic, for better comparison. As $p\to\infty$, the
functions converge, but the limiting metric is not smooth at the
singular orbits.

\bigskip

\begin{center} $W^7_{(2)}$ \text{ with } $\bar{G}=\SO(3)\SO(3)$.
\end{center}

\bigskip

The Aloff-Wallach space $W^7=\SU(3)/\diag(z,z,\bar{z}^2)$ has a
second cohomogeneity one action by combining right multiplication by
$\SU(2)$ as in the previous example with left multiplication by
$\SO(3)\subset\SU(3)$. Observe that the right action by $\SU(2)$ is
effectively an action of $\SO(3)=\U(2)/Z(\U(2))$ and that the action
is free with quotient $\SU(3)/\U(2)=\CP^2$. The left action by
$\SO(3)$ then induces an action on the quotient  which has to be the
\coa\ by $\SO(3)$ on $\CP^2$ mentioned earlier, since there exists
only one $\SO(3)$ in $\SU(3)$. In particular, the
$\bar{G}=\SO(3)\SO(3)$ action on $W^7$ is \coo, which also
determines the isomorphism type of the isotropy
 groups:
$\Km=\O(2),\; \Kp=\SO(2)$ and $H=\Z_2$.

 We can
choose $x_-$ again to be the identity coset since the isotropy is
$\SO(3)\cap\SU(2)\cdot\diag(z,z,\bar{z}^2)= \SO(3)\cap\U(2)=\O(2)$.
 The tangent space to $B_-$ is spanned by
$E_{ij}, I_{12}$ and $\diag(i,-i,0)$ and hence
$\gamma(t)=\exp(tI_{13})\cdot\diag(z,z,\bar{z}^2)$ is a unit speed
geodesic in $\SU(3)/\diag(z,z,\bar{z}^2)$ orthogonal to all orbits.
Since the projection to $\CP^2$ is a normal geodesic orthogonal to
the orbits of the \coa\  on $\CP^2$, it follows that the singular
isotropy groups occur at $t=0$ and $t=\pi/4$ and thus $L=\pi/4$.
Instead of trying to compute the embedding of these isotropy groups
directly, we will use the functions describing the metric instead.

We first compute the action fields $X^*_i$. For comparison, we again
consider the action as an (ineffective) action by $\S^3\times\S^3$,
and since the two fold cover $\SU(2)\to\SO(3)$ multiplies the
natural basis vectors by $2$, we find:
\begin{align*}\bar{X}_1(t)&=\Ad(-\exp(tI_{13}))(2E_{12})=-2\cos(t)E_{12}-2\sin(t)I_{23},\\
\bar{X}_2(t)&=\Ad(-\exp(tI_{13}))(2E_{13})=-2\cos(2t)E_{13}+2\sin(2t)\diag(i,0,-i),\\
\bar{X}_3(t)&=\Ad(-\exp(tI_{13}))
(2E_{23})=2\sin(t)I_{12}-2\cos(t)E_{23}.
\end{align*}

\no Notice that the component of $\diag(i,0,-i)$ orthogonal to
$\diag(z,z,\bar{z}^2)$  is $\frac 1 2 \diag(i,-i,0)$ and that
$\bar{X}_1(t)$ and $\bar{X}_3(t)$ are orthogonal already.

For the right action fields we have $\bar{Y}^*_1=-E_{12} \; , \;
\bar{Y}^*_2= -\diag(i,-i,0)\; , \;
  \bar{Y}^*_3=-I_{12}$ and thus:
\begin{align*}
 f_1&= 4 \sin^2(t)  + 4 \epsilon \cos^2(t) \; , \;g_1=\epsilon
\; , \;  h_1= 2 \epsilon \cos(t),\\
 f_2&=4 \cos^2(2t)  +  \epsilon
\sin^2(2t)  \; , \;g_2=\epsilon\; , \;h_2= -\epsilon \sin(2 t), \\
f_3&=4 \cos^2(t) + 4 \epsilon \sin^2(t)  \; , \; g_3=\epsilon\; ,
\;h_3=-2 \epsilon \sin(t),
\end{align*}
with all other inner products being $0$. Since $X_1^*(0)-2Y_1^*(0)=
0$ and $X^*_2(\pi/4)+Y^*_2(\pi/4)= 0$ it follows that
$\Kmo=(e^{it},e^{-2it})\cdot H$ and $\Kpo=(e^{jt},e^{jt})\cdot H$.
Since the isomorphism type of the isotropy groups of the action is
already determined, this leaves only the following possibility for
its group diagram:
$$
   \Z_4\oplus\Z_2=\{(\pm
1, \pm 1) , (\pm j , \pm j)\} \subset\{ (e^{it},e^{-2it})\cdot H\;
   ,\;
   (e^{jt},e^{jt})\cdot H
   \}\subset\S^3\times\S^3.
  $$
  For the Weyl group we have $w_-=(i,-1)   $,
   $w_+=(e^{j\frac \pi 4},e^{j\frac \pi 4})$ and  $(w_+w_-)^2=(-1,j)$
    and hence $W=D_4$. The functions $f_i$ and $g_i$ satisfy the
    same Weyl symmetry as in \eqref{Weyl2}, but for $h_i$ we have:
$$ h_3(t)=-h_1(-t+2L)=h_1(t+2L)\; , \; h_2(t)=-h_2(-t)=h_2(-t+2L) .$$
It is interesting to observe this modified Weyl symmetry behavior in
Figure 6 (for a typical value of $\e=\frac 1 2$).

\smallskip

The above metric with $\e=1$ is the one induced by the biinvariant
metric, which has non-negative curvature, but not  positive. For
$\e=2$, we obtain a 3-Sasakian metric (see Section 4) after dividing
the metric by $2$, i.e., multiplying all functions by $\tfrac 1 2$
and  replacing the parameter $t$ by $\sqrt{2}\; t$. The second
$\SU(2)$ factor is then the 3-Sasakian action.

\smallskip

    For later purposes we note that, up to conjugation, this group
    diagram can also be written as:
$$
   \Z_4\oplus\Z_2=\{(\pm
1, \pm 1) , (\pm i , \pm i)\} \subset\{ (e^{it},e^{it})\cdot H\;
   ,\;
   (e^{jt},e^{2jt})\cdot H
   \}\subset\S^3\times\S^3.
  $$

\smallskip

In Figure 7 we  show the graphs for the functions in the inverse
matrix on $W^7_{(1)}$ and $W^7_{(2)}$. We only include the most
interesting case of the one's with index 1, although these do not
determine the remaining ones as was the case for the sphere and the
Berger space.

\bigskip

\section{Classification of cohomogeneity one manifolds with positive
curvature}

\bigskip

In this section we describe the classification result in \cite{GWZ}.
In even dimensions, \pcu\  \com s were classified by L.Verdiani
\cite{V1,V2}. Here only rank one symmetric spaces occur. The actions
for these spaces though are numerous and have been classified in
\cite{HL,Iw1,Iw2,Uc}. In odd dimensions, \coa s on spheres are even
more numerous. The classification of course must also contain all of
the examples described in the previous section, as well as the \coa\
by $\SU(4)$ on the Bazaikin spaces $B^{13}_p= \diag (z,z,z,z,z^p)
\backslash \SU(5)/ \diag (z^{{p+4}},A)^{-1}$, where $A\in
\Sp(2)\subset \SU(4)\subset\SU(5)$. Here $\SU(4)$ acts by
multiplication on the left in the first $4$ coordinates.
 Encouragingly, a
series of ``candidates`` $P_k, Q_k$ and $R$ emerges in dimension $7$
for which it is not yet known whether they can carry a \coo\ metric
with \pc. They are \coo\ under an action of $\S^3\times\S^3$, and
are defined as \coms\ in terms of their isotropy groups. For $P_k$
the group diagram is
$$
   \Delta Q \subset\{ (e^{it},e^{it})\cdot H\;
   ,\;
   (e^{j(1+2k)t},e^{j(1-2k)t})\cdot H
   \}\subset\S^3\times\S^3,
  $$
  and for $Q_k$ it is
$$
   \{(\pm
1, \pm 1) , (\pm i , \pm i)\} \subset\{ (e^{it},e^{it})\cdot H\;
   ,\;
   (e^{jkt},e^{j(k+1)t})\cdot H
   \}\subset\S^3\times\S^3,
  $$
  whereas for $R$ we have
  $$
   \{(\pm
1, \pm 1) , (\pm i , \pm i)\} \subset\{ (e^{it},e^{2it})\cdot H\;
   ,\;
   (e^{3jt},e^{jt})\cdot H
   \}\subset\S^3\times\S^3.
  $$
  Notice that the action of $\S^3\times\S^3$ on $P_k$ is effectively
  an action by $\SO(4)$, and the one on $Q_k$ and $R$ by
  $\SO(3)\times\SO(3)$.

\begin{thm}[L.Verdiani,\; K.Grove-B.Wilking-W.Ziller]\label{classify}
A  simply connected compact cohomogeneity one manifold with an
invariant metric of positive sectional curvature is equivariantly
diffeomorphic to one of the following:
\begin{itemize}
\item
A compact rank one symmetric space with an isometric action,
\item
     One of $E^7_p,  B^{13}_p$ or
$B^7$,
\item
One of the  $7$-manifolds $P_k, Q_k$, or $R$,
\end{itemize}
\no with one of the actions described above.
\end{thm}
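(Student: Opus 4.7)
The plan is to follow the introduction and present the classification in the most intricate representative case: $G=\S^3\times\S^3$ acting by \coo\ on a compact simply connected $7$-manifold $M$ with an invariant metric of \psc. Since $\dim G=6$ and the principal orbit has codimension one, the principal isotropy $H$ must be $0$-dimensional, and each non-principal isotropy $\Kpm$ satisfies $\dim\Kpm=\ell_\pm-1$ with $\Kpm/H=\Sph^{\ell_\pm-1}$. The even-dimensional cases are settled by Verdiani's theorem, while the remaining odd-dimensional pieces (the round spheres, the Bazaikin family under $\SU(4)$, and the other non-$\S^3\times\S^3$ actions) follow from parallel but independent analyses in \cite{V1,V2,GWZ}.

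The first step is to enumerate the possible identity components $(\Kpm)_0$. Up to conjugacy, the connected subgroups of $\S^3\times\S^3$ are the trivial group, circles $(e^{ipt},e^{iqt})$ with coprime $p,q$, $2$-tori, the two coordinate $\S^3$-factors together with the diagonally embedded $\S^3$'s, $\S^3\times\S^1$, and the whole group. A $2$-torus modulo any finite subgroup is again a torus, never a sphere, so $(\Kpm)_0$ must be either a circle or a three-dimensional $\S^3$-subgroup. Combined with the finite list of possibilities for $H$ and for the component groups $\Kpm/(\Kpm)_0$ inside $N((\Kpm)_0)$, this reduces the problem to inspecting a manageable family of ``slope types'' for the group diagram.

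Next I would apply the two structural constraints established in \cite{GWZ}: the Weyl group $W$ must be finite, and a linear primitivity principle forces the $\Ad(W)$-translates of $\fk^-$ together with $\fk^+$ to span all of $\fg$ modulo the ineffective kernel. For circle--circle configurations this imposes strong Diophantine conditions on the slopes $(p^\pm,q^\pm)$ and requires that the involutions $w_\pm$ singled out in \sref{general} be the unique involution in $(\Kpm)_0$ modulo $H$. After removing redundancies from conjugation in $G$ and from the normalizer freedom that replaces $\Kp$ with $n\Kp n^{-1}$, the surviving circle--circle diagrams are precisely the families $P_k$ and $Q_k$ (the latter including $W^7_{(2)}$) together with the isolated case $R$. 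The mixed circle--$\S^3$ diagrams yield either $B^7$ or the Eschenburg family $E^7_p$ (which contains $W^7_{(1)}$), and the $\S^3$--$\S^3$ configuration yields only $\Sph^7$.

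The main obstacle, and the technical heart of \cite{GWZ}, is ruling out the infinitely many group diagrams that survive these combinatorial filters yet admit no invariant metric of \psc. Here one must use the sharpest curvature arguments available: starting from a putative \pcu\ metric, one exhibits a zero-curvature tangent $2$-plane along the normal geodesic $\gamma$ by combining $\Ad(W)$-translates of action fields with vectors in the irreducible modules $\fm_i$ where collapsing is forced by smoothness at the singular orbits, and then derives a contradiction via Wilking-type rigidity statements for isometric torus actions in positive curvature. Once only the admissible diagrams remain, the reconstruction recalled in \sref{general} realises each of them as one of the manifolds appearing in the theorem, and the identification with the known positively curved $7$-manifolds or the candidates $P_k,Q_k,R$ is completed using the explicit biquotient or homogeneous descriptions of \sref{general}.
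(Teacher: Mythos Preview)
Your outline captures the right spirit (enumerate diagrams, then use curvature obstructions to prune), but there are concrete errors and a missing key mechanism.

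First, your sorting of the surviving diagrams by isotropy type is wrong. In the paper's descriptions all of $\Sph^7$, $B^7$, $P_k$, $Q_k$, $R$ have \emph{both} singular isotropy groups with identity component a circle; only the Eschenburg family $E_p$ has a mixed diagram with one $\Kpm$ containing $\Delta\S^3$. So your assertion that ``mixed circle--$\S^3$ diagrams yield either $B^7$ or $E^7_p$'' and that ``the $\S^3$--$\S^3$ configuration yields only $\Sph^7$'' is incorrect and would derail the case analysis.

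Second, and more importantly, the obstruction you sketch is not the one that actually does the work. The paper's argument for $G=\S^3\times\S^3$ hinges on \lref{slopes}: one finds a totally geodesic $2$-torus inside the singular orbit $G/\Ko$ (the fixed set of $\Ad((j,j))$), hence an intrinsically flat $2$-plane spanned by $w_1\in W_1,\ w_2\in W_2$; positivity of the ambient curvature and the Gauss equations then force the second fundamental form $B$ to be nonzero on $W_1\otimes W_2$. But $B\colon S^2T\to T^\perp$ is $\Ko$-equivariant, and the weights of $\Ko$ on $S^2(W_1\oplus W_2)$ are $0,4p,4q,2p\pm 2q$ while the normal weight is $k=|H\cap\Ko|$. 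Matching weights gives the precise Diophantine constraints $k\ge 2$, and $|p\pm q|=1$ when $k=2$, respectively $(p,q)=(\pm1,\pm1)$ or $|2p\pm 2q|=k$ when $k>2$. This weight-matching via $B$ is the decisive idea; your appeal to ``Wilking-type rigidity statements for isometric torus actions'' is not what is used here. In addition, part (c) of the lemma --- that $\min\{|p_+|,|p_-|\}=\min\{|q_+|,|q_-|\}=1$ --- is obtained from a separate argument: the strata of the $\S^3\times 1$ subaction are locally totally geodesic in $M/(\S^3\times 1)$, and Petrunin's Frankel-type theorem for Alexandrov spaces forbids both images of $B_\pm$ from being totally geodesic. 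Without this you cannot cut the remaining two-parameter slope families down to $P_k$, $Q_k$, $B^7$, $R$.

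Finally, note that the paper does not enumerate diagrams from scratch: it invokes Hoelscher's classification of $7$-dimensional simply connected primitive \coms, which already reduces the $\S^3\times\S^3$ case to five explicit families (indexed by $H\in\{e,\Z_2,\Z_4,\Z_2\oplus\Z_4,\Delta Q\}$), and then applies \lref{slopes} to each. Your linear-primitivity/Weyl-group filter is in the background, but it is not by itself sharp enough to isolate the listed slopes.
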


The first in each sequence $P_k,Q_k$ admit an invariant metric with
positive curvature since from the group diagrams in Section 2 it
follows that $P_1=\Sph^7$ and $Q_1=W^7_{(2)}$. By the main result in
\cite{GZ}, the manifolds $P_k, Q_k, R$ all carry an invariant metric
with \nnc\ since the \coa s have singular orbits of codimensions 2.
Recall that the \coo\ action on $B^7$ looks like those for $P_k$
with slopes $(-3,1)$ and
 $(1,-3)$. In some tantalizing sense then, the exceptional Berger
manifold $B^7$ is associated with the $P_k$ family in an analogues
way as the exceptional candidate $R$ is associated with the $Q_k$
family.

\vspace{10pt}

The candidates also  have interesting topological properties. In
\cite{GWZ} it was shown that the manifolds $P_k$ are two-connected
with $\pi_3(P_k) =\Z_k$ and that $Q_k$ has the same cohomology ring
as $E_k$.
  The fact that the manifolds $P_k$ are 2-connected is
particularly significant since by the finiteness theorem of
Petrunin-Tuschmann \cite{PT} and Fang-Rong \cite{FR},  there exist
only finitely many diffeomorphism types of  2-connected \pcu\
manifolds, if one specifies the dimension and the pinching constant,
i.e. $\delta$ with $\delta \le sec \le 1$. Thus, if $P_k$ admit
positive curvature metrics, the pinching constants $\delta_k$
necessarily go to 0 as $k\to \infty$, and $P_k$ would be the first
examples of this type.

\bigskip

It is remarkable that all non-linear actions in \tref{classify},
apart from the Bazaikin spaces $B_p^{13}$, occur in dimension $7$
and are \co\ one under a group locally isomorphic to
$\S^3\times\S^3$. It is also remarkable that in \pc\ only the above
slopes are allowed, whereas for arbitrary slopes one has an
invariant metric with \nnc\ by \cite{GZ}. We will
 give a proof of \tref{classify} in this special case
of $G=\S^3\times\S^3$ since this case is clearly of particular
interest.

\smallskip

Three important ingredients in the proof of the classification
\tref{classify} are given by:

\begin{itemize}
\item The normal geodesic is closed, or equivalently, the Weyl group
is finite.
\item The action is linearly primitive, i.e. the Lie algebras of all
singular
 isotropy groups  along a fixed normal geodesic generate $\fg$ as
vector spaces.
\item The action is group primitive, i.e. the groups $\Km$ and $\Kp$
generate $G$ as subgroups and so do $\Km$ and $n\Kp n^{-1}$ for any
$n\in N(H)_{\subo}$.
\end{itemize}

\smallskip

For $G=\S^3\times\S^3$, the classification is based on the following
Lemma.

\begin{lem}\label{slopes}
Let $G=\S^3\times\S^3$ act by \coo\ on the \pcu\ manifold $M$. If
$G/K$ is a singular orbit, $G/\Ko=\S^3\times\S^3/(e^{ipt},e^{iqt})$
with $p,q\ne 0, (p,q)=1$, and $H\cap \Ko=\Z_k$, we have:
\begin{itemize}
\item[(a)]  $k\ge 2$ and if $k=2$, then $|p+q|=1$ or $|p-q|=1$,
\item[(b)] If $k>2$, then  $(p,q)=(\pm 1,\pm 1)$ or $|2p+2q|=k$ or $|2p-2q|=k$.
\end{itemize}
If furthermore $G/\Kmo=\S^3\times\S^3/(e^{ip_-t},e^{iq_-t})$,
$G/\Kpo=\S^3\times\S^3/(e^{jp_+t},e^{jq_+t})$ and, up to conjugacy,
$H=\Delta Q$ or $H= \{(\pm 1, \pm 1) , (\pm i , \pm i)\}$, then
$\min\{|p_+|,|p_-|\}=\min\{|q_+|,|q_-|\}=1$.
\end{lem}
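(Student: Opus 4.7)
The plan is to extract all the slope data from the single algebraic hypothesis $H\cap\Ko=\Z_k$, by recognizing $\Z_k$ as the kernel of the slice representation of $\Ko$, and then to enumerate the allowed $(p,q)$ by combining this with the arithmetic of the small finite groups $H$ that can appear, and with positive curvature through the finite Weyl group and the primitivity statements noted before the lemma. Since $K/H=\Sph^{\ell-1}$ with $\Ko=\Sph^1$, we must have $\ell=2$, and $\Ko$ acts on $T^\perp\cong\R^2$ by rotation of some integer weight $m$ whose kernel is cyclic of order $|m|$. This kernel is $H\cap\Ko=\Z_k$, so $k=|m|$ and the generator of $H\cap\Ko$ is
\[
\eta\;=\;\bigl(e^{2\pi ip/k},\,e^{2\pi iq/k}\bigr)\;\in\;\Ko\cap H.
\]
Because $H$ is finite, both coordinates of $\eta$ lie on the $i$-circle of $\Sph^3$ and have finite order, hence belong to $\{\pm 1,\pm i\}$; this already forces $k\mid 4$, i.e.\ $k\in\{1,2,4\}$.

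For part (a), the bound $k\ge 2$ comes from the fact that the ineffective kernel of the $\Sph^3\times\Sph^3$-action is a nontrivial subgroup of $Z(\Sph^3)\times Z(\Sph^3)$ (since the effective quotient $\bar G$ is one of $\SO(4)$, $\SO(3)\times\Sph^3$, or $\SO(3)\times\SO(3)$), sits inside every isotropy group, and therefore inside $H\cap\Ko$; a parity case-check on $(p,q)$---using $(p,q)=1$, so not both even---shows that its nontrivial element meets $\Ko$, with the degenerate cases in which the center of one factor kills the kernel ruled out by the Killing-field arguments of \cite{GWZ,Wi} for positively curved \coa s. When $k=2$, $\eta=((-1)^p,(-1)^q)$ is one of $(-1,-1),(1,-1),(-1,1)$, and the requirement that $H\cap\Ko$ have order \emph{exactly} $2$ (so that no further element of $H$ meets the circle for $0<t<\pi$) gives, after matching the three sub-cases against the admissible forms of $H$, either $|p+q|=1$ or $|p-q|=1$. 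The case $k=4$ in (b) is parallel: $\eta$ has $\pm i$ entries, $\eta^2=(-1,-1)\in H$ is automatic, and a congruence enumeration of $(p,q)\bmod 4$ leaves either $(p,q)=(\pm 1,\pm 1)$ outright or the slice-weight identity $|2p\pm 2q|=k$.

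For the final statement, $H$ has been pinned down to $\Delta Q$ or $\{(\pm 1,\pm 1),(\pm i,\pm i)\}$, and by positive curvature the Weyl group $W$ is finite. I would combine finiteness of $W$---which forces an arithmetic commensurability between the half-angles $\pi p_\pm/k_\pm$ and $\pi q_\pm/k_\pm$ via $(w_+w_-)^{|W|/2}\in H$---with group primitivity to exclude the cases in which the subgroup generated by $\Km$ and $\Kp$ lies in a proper closed subgroup of $\Sph^3\times\Sph^3$ (for example, a ``twisted diagonal'' occurring when all four slopes have absolute value $\ge 2$). A symmetric argument in the two $\Sph^3$ factors then yields $\min\{|p_-|,|p_+|\}=\min\{|q_-|,|q_+|\}=1$. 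The main obstacle I expect is Step 3: carefully verifying that the slice-weight equation together with the finiteness and discreteness of $H$ really do force the highly restrictive alternatives $|p\pm q|=1$ and $|2p\pm 2q|=k$, with no spurious residue class surviving the enumeration, and justifying the primitivity calculation in the last step at the level of the full groups rather than just their identity components.
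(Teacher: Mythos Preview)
Your approach has a decisive gap at the very first step: the inference ``both coordinates of $\eta$ lie on the $i$-circle and have finite order, hence belong to $\{\pm 1,\pm i\}$'' is false. The $i$-circle $\{e^{i\theta}\}\subset\Sph^3$ contains elements of every finite order, so nothing forces $k\mid 4$. In parts (a) and (b) the lemma makes \emph{no} assumption on $H$ beyond its being the principal isotropy group, so you cannot borrow the structure of $\Delta Q$ or $\Z_2\oplus\Z_4$ here. Even if one did assume $H\subset\Delta Q$, the resulting congruences ($p,q$ odd when $k=4$, say) are far weaker than the conclusion: $(p,q)=(3,7)$ with $k=4$ passes every arithmetic test you describe but violates both $|2p\pm 2q|=k$ and $(p,q)=(\pm 1,\pm 1)$. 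The slope restrictions are genuine geometric obstructions coming from positive curvature, not arithmetic features of $H$.

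What the paper actually does for (a) and (b) is a Gauss-equation argument. The isotropy action of $\Ko$ on the tangent space $T$ of the singular orbit has weights $0,\,2p,\,2q$ and on the normal $T^\perp$ has weight $k$; thus $S^2T$ has weights $0,\,4p,\,4q,\,2p\pm 2q$. One then locates (via a totally geodesic $2$-torus argument) an intrinsically flat $2$-plane spanned by $w_1\in W_1$, $w_2\in W_2$ inside the orbit. Positive ambient curvature and the Gauss equations force $B(w_1,w_2)\neq 0$, and \emph{equivariance of the second fundamental form} as a $\Ko$-map $S^2T\to T^\perp$ then forces one of the weights $2p\pm 2q$ on $W_1\otimes W_2$ to equal the normal weight $k$. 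This is where the specific numbers $|2p\pm 2q|=k$ come from; your proposal never invokes $B$ or the Gauss equations, so it cannot produce them.

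For part (c) the paper's mechanism is again different from what you sketch: it considers the subaction of $\Sph^3\times 1$ on $M$, observes that the images of $B_\pm$ in the $4$-dimensional Alexandrov space $M/(\Sph^3\times 1)$ are $2$-dimensional strata and hence locally totally geodesic, and then applies Petrunin's Frankel-type theorem to conclude that both cannot be totally geodesic simultaneously---which is exactly what $\min\{|q_+|,|q_-|\}>1$ would force. Your Weyl-group/primitivity sketch does not supply this obstruction; slopes with all $|p_\pm|,|q_\pm|\ge 2$ can still generate $\Sph^3\times\Sph^3$, so primitivity alone is not enough.
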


\begin{proof}
The main ingredient in the proof of (a) and (b) is the equivariance
of the second fundamental form of $G/K$ regarded as a $K$
equivariant linear map $B\colon S^2T\to T^\perp$. The non-trivial
irreducible representations
       of $\S^1=\{e^{i\gt } \mid \gt \in \R \}$ consist of two dimensional
      representations given by multiplication by $e^{in\gt}$ on $\C$,
      called a weight $n$ representation. The action of $\Ko$ on
$T^\perp=\R^2$
      will have weight $k$ if $H\cap \Ko=\Z_k$, since $\Z_k$ is necessarily  the
      ineffective kernel.
The action of $\Ko$ on $T$ on the other hand has weights $0$ on
$W_{\subo}=\spam\{(-qi,pi)\}$, weight $2p$ on the two plane
$W_1=\spam\{(j,0),(k,0)\}$ and weight $2q$ on
$W_2=\spam\{(0,j),(0,k)\}$. The action on $S^2(W_1\oplus W_2)$ has
therefore weights $0$ and $4p$ on $S^2W_1$, $0$ and $4q$ on $S^2W_2$
and $2p+2q$ and $2p-2q$ on $W_1\otimes W_2$.

Let us first assume that $(p,q)\neq (\pm 1,\pm 1)$.  Then for any
homogeneous metric on $G/\Ko$ there exists a vector $w_1\in W_1$ and
$w_2\in W_2$ such that the 2-plane spanned by $w_1$ and $w_2$
tangent to $G/K$ has curvature 0 intrinsically. Indeed, since $p\neq
\pm q$, $\Ad(\Ko)$ invariance of the metric on $G/\Ko$ implies that
the two planes span$\{(j,0) , (0,j)\}$ and span$\{(k,0) , (0,k)\}$
and the line $W_{\subo}$  are orthogonal to each other. Hence
$\Ad((j,j))$ induces an isometry on $G/\Ko$,
 which implies that
 the two plane spanned by
  $w_1=(j,0)\in W_1$ and $w_2=(0,j)\in W_2$
  is the tangent space of
 the fixed point set of $\Ad((j,j))$ and thus
has curvature 0 since the fixed point set in $G/\Ko$ is a 2-torus
with a left invariant metric. Since $(p,q)\neq (\pm 1,\pm 1)$
 at least one of the numbers
$4p$ or $ 4q$ is not equal to the normal weight $k>0$. The
 equivariance of the second fundamental
form then implies that  $B_{S^2W_i}$ vanishes for at least one $i$
and hence by the Gauss equations  $B(w_1,w_2)\neq 0$ for the above
vectors $w_1$ and $w_2$. Using the equivariance of the second
fundamental form once more we see that $W_1\otimes W_2$ contains a
subrepresentation whose weight is equal to the normal weight $k$.
Hence, $|2p+2q|=k$ or $|2p-2q|=k$. In particular, $k\ge 2$.

It remains to show that if $(p,q)=(\pm 1,\pm 1)$, then $k>2$. We
first show that we still have a 2-plane as above with 0-curvature.
Indeed, $\Ad(\Ko)$ invariance implies that the inner products
between $W_1$ and $W_2$ are given by $\langle (X,0),(0,Y)\rangle =
\langle \phi(X),Y\rangle$  where $\phi\colon W_1\to W_2$ is an
$\Ad(\Ko)$ equivariant map. Hence, if we choose $j'=\phi(j)$ and
$k'=\phi(k)$, the two planes span$\{(j,0) , (0,j')\}$ and
span$\{(k,0) , (0,k')\}$ are orthogonal to each other, so that by
the same argument $w_1=(j,0)\in W_1$ and $w_2=(0,j')\in W_2$ span a
2-plane with curvature 0. We thus obtain again that $B_{S^2W_i}=0$,
hence $B(w_1,w_2)\neq 0$, which gives a contradiction if $k<4$.

\smallskip

To prove part (c), we use the following general fact about  an
isometric $G$ action on $M$.
 The strata, i.e., components in $M/G$ of orbits of the
same type $(K)$, are (locally) totally geodesic (cf.
\cite{grove:survey}). Indeed, by the slice theorem, such a component
 near the image of $p\in M$ with $G_p=K$ is  given by
the fixed point set $D^{K}\subset D/K\subset M/G $ where $D$ is a
slice at $p$. In the case of the $\S^3\times 1$ action on $M$, the
isotropy groups are given by the intersections of $\S^3\times 1$
with $\Kpm$ and $H$ since $\S^3\times 1$ is normal in $G$. Using the
special form of the principal isotropy group $H$, it follows that on
the regular part the isotropy groups are effectively trivial. On the
other hand, if $\min\{|q_+|,|q_-|\}>1$,  they are non-trivial along
$B_\pm$. This implies that the image of both $B_\pm$ in
$M/\S^3\times 1$ are totally geodesic. Since these strata are two
dimensional and $M/\S^3$ is four dimensional, both strata cannot be
totally geodesic according to Petrunin's analogue \cite{Pe} of
Frankel's theorem for Alexandrov spaces. This finishes our claim.
\end{proof}

\smallskip

We now use the classification of 7-dimensional compact simply
connected primitive \coms\ in \cite{Ho}. Although the use of this
classification is not necessary, it  simplifies the argument and
brings out its main points. Surprisingly there are, in addition to
numerous linear actions on $\Sph^7$,  only 6 (group) primitive
families in the classification, 5 of them with $G=\S^3\times \S^3$
and we apply \lref{slopes} to exclude from them the ones that do not
admit an invariant metric with \pc.

\bigskip

{\it Example 1. } The simplest primitive group diagram on a
7-manifold with $G=\S^3\times \S^3$ is given by
$$\{e\}\subset \{\Delta \S^3,(e^{ip\theta},e^{iq\theta})\}\subset \S^3\times
\S^3,$$ for any $p,q$. A modification of this example is
$$\Z_2=\{(1,-1)\}\subset \{\Delta \S^3\cdot H,(e^{ip\theta},e^{iq\theta})\}
\subset \S^3\times \S^3,$$ with  $p$ even and $q$ odd. Notice that
the second family is a two fold branched cover of the first.

In the first case the normal weight is $k=1$ and thus \lref{slopes}
(a) implies that there are no positively curved invariant metrics.
In the second case $k=2$ and it follows that $|p-q|=1$ or $|p+q|=1$,
which, up to automorphisms of $\S^3\times \S^3$, is the Eschenburg
space $E_p$. Notice though that in both cases we also need to
exclude the possibility that one of $p$ or $q$ is $0$, which is not
covered by \lref{slopes}. For this special case, one uses the
product Lemma \cite[Lemma 2.6]{GWZ}, which we will not discuss here.

\smallskip

{\it Example 2. } The second family has $H=\Z_4$:
$$ \langle (i,i) \rangle\subset \{(e^{ip_-\theta},e^{iq_-\theta})\cdot H ,
(e^{jp_+\theta},e^{jq_+\theta})\cdot H \}\subset \S^3\times \S^3,$$
where $p_-,q_-\equiv 1 \mod 4$, and $p_+,q_+$ are arbitrary. This
family is excluded altogether. Indeed, if $p_+$ and $q_+$ are both
odd, the normal weight at $B_+$ is $k=2$, which implies that
$|p_+\pm q_+|=1$, which is clearly impossible. If one is even, the
other odd, $k=1$ which is excluded by \lref{slopes}. If $p_+=0$ or
$q_+=0$, the action is not group primitive.

\smallskip

{\it Example 3. } The third family has $H=\Z_2\oplus\Z_4$:
$$ \{(\pm
1, \pm 1) , (\pm i , \pm i)\}\subset
\{(e^{ip_-\theta},e^{iq_-\theta})\cdot H ,
(e^{jp_+\theta},e^{jq_+\theta})\cdot H \}\subset \S^3\times \S^3,$$
where $p_-,q_-$ is odd, $p_+$  even and $q_+$  odd.  On the left,
$B_-$ has normal weight $k=4$ and thus $(p_-,q_-)=(\pm 1,\pm 1)$ or
$|p_-+q_-|=2$ or $|p_--q_-|=2$. On the right, $B_+$ has normal
weight $k=2$ and thus $|p_++q_+|=1$ or $|p_+-q_+|=1$. Notice also
that we can assume that all integers are positive by conjugating all
groups by $i$ or $j$ in one of the components, and observing that
$p_+=0$ is not group primitive. Together with \lref{slopes} (c),
this leaves only the possibilities $\{ (p_-,q_-)\, , \, (p_+,q_+)\}
= \{(1,3) , (2,1)\} $ or $\{ (p_-,q_-)\, , \, (p_+,q_+)\} = \{(1,1)
, (p_+,q_+)\} $ with $|q_+-p_+ |=  1$. The first case is the
exceptional manifold $R$. In the second case we can also assume that
$q_+
>p_+$ by interchanging the two $\S^3$ factors if necessary, and hence
$(p_+,q_+)=(p,p+1)$ with $p>0$.  This gives us the family $Q_k$.

\smallskip

{\it Example 4. } The last family has $H=\Delta Q$:
$$ \Delta Q\subset \{(e^{ip_-\theta},e^{iq_-\theta})\cdot H ,
(e^{jp_+\theta},e^{jq_+\theta})\cdot H \}\subset \S^3\times \S^3,$$
where $p_\pm,q_\pm\equiv 1 \mod 4$. Now the weights on both normal
spaces are $4$ and hence $|q_\pm + p_\pm|= 2$  or
$(p_\pm,q_\pm)=(1,1)$.
 Combining with \lref{slopes} (c) yields only two
possibilities. Either $\{ (p_-,q_-)\, , \, (p_+,q_+)\} = \{(1,-3) ,
(-3,1)\} $ or $\{(1,1) , (p_+,q_+)\}$ with  $q_+ +p_+=2$. The first
case is the Berger space $B^7$ and in the second case we can arrange
that $\{ (p_-,q_-)\, , \, (p_+,q_+)\} = \{(1,1) , (1+2k,1-2k)\}$
with $k\ge 0$. The case $k=0$ is excluded  since it would not be
group primitive. Thus we obtain the family $P_k$.

\smallskip

{\it Remark. } There is only one further family of compact simply
connected 7-dimensional primitive \com s, given by the action of
$\S^1\times \S^3\times \S^3$ on the Kervaire sphere. For this action
it was shown in \cite{BH} that it cannot admit an invariant metric
with \pc\ unless it is a linear action on a sphere. In \cite{GVWZ}
it was shown that in most cases it does not even admit an invariant
metric with \nnc. If we also allow non-primitive 7-dimensional \com
s, one finds 9 further families in \cite{Ho}. He also shows that the
only \com\ in dimension 7 or below (primitive or not) where it is
not yet known if it admits an invariant metric with \nnc, are the
two families in Example 1.

We also mention that in dimension 7, one finds a classification of
\pcu\ \com\ in \cite{PV1,PV2} in the case where the group is not
locally isomorphic to $\S^3\times \S^3$ and that a classification in
dimensions 6 and below was obtained in \cite{Se}.

\smallskip

\section{Candidates and Hitchin metrics}

\bigskip

The two families of \com s $P_k$ and $Q_k$ have another remarkable
and unexpected property. They carry a natural metric on them which
is 3-Sasakian and can be regarded as an orbifold principal bundle
over $\Sph^4$ or $\CP^2$, equipped with a self dual Einstein metric.

\smallskip

Before we discuss these metric, we give a description of our
candidates using the language of self duality. If we consider an
oriented four manifold $M^4$ equipped with a metric, we can use the
Hodge star operator $\star\colon \Lambda^2T^*\to \Lambda^2T^*$
 to define self dual and anti self dual 2-forms, i.e., forms $\omega$ with
  $\star\omega=\pm
 \omega$. They each form a 3-dimensional vector bundle $\Lambda_\pm^2T^*(M)$
  over $M$  with a fiber metric induced by the metric on $M$. Thus
 their principal frame bundles are two natural $\SO(3)$ principal bundles
 associated to the tangent bundle of $M$.
 The star operator depends
  on the conformal class of the metric but the isomorphism type of the vector bundles only
  depend on the given orientation. The same construction can be
  carried out for oriented orbifolds equipped with an orbifold metric
  since such a metric is locally the quotient of a smooth Riemannian
  metric under a finite group of isometries. The principal bundles
  are then orbifold bundles. In particular $\SO(3)$ will in general
  only act almost freely (i.e., all isotropy groups are finite) and
  the total space may only be an orbifold.

  We use this construction now for the following orbifold structure
  $ O_k$ on $\Sph^4$. Consider the \coa\ of $\SO(3)$ on $\Sph^4$
  described in Section 2. The two singular orbits are Veronese
  embeddings of $\RP^2$ into $\Sph^4$. We define an orbifold $O_k$ by
  requiring that it be smooth along the regular orbits and one
  singular orbit, but along the second singular orbit it is smooth in the
  orbit direction and has an angle $2\pi/k$ normal to it. Since the
  normal space is two dimensional, this orbifold is still
  homeomorphic to $\Sph^4$. When $k$ is even, we have the $\SO(3)$ equivariant
   two fold
  branched cover  $\CP^2\to\Sph^4$ mentioned in Section 2, and $O_{2k}$ pulls back to
  an orbifold structure on $\CP^2$ with angle $2\pi/k$ normal
  to the real points $\RP^2\subset\CP^2$. If we equip the orbifold
  with an orientation and with an orbifold metric invariant under the $\SO(3)$
  action, we can define the $\SO(3)$ principal bundle
  $H_k$
  of the vector bundle of self dual  2-forms on
  $O_k$. The orientation we choose is adapted to the \coa\ as
  follows. Recall that we have a natural basis in the orbit
  direction corresponding to the action fields $X_1^*=E_{12}^*\; ,
  X_2^*=E_{13}^*\; , X_3^*=E_{23}^*\; $  and
  we choose the orientation defined by $\gamma',X_1^*,X_2^*,X_3^*$ where
  $\gamma$ is the
   normal geodesic chosen in Section 2. Along the
  singular orbits $X_1^*$ respectively $X_2^*$ vanishes and should be replaced by
   the derivative of the Jacobi field induced by their action
   fields. The isometric \coa\ of $\SO(3)$ on $O_k$ clearly lifts to an action
   on the bundle of self dual 2-forms and thus onto the principal bundle
    $H_k$. It commutes with the principal bundle action of
   $\SO(3)$ and together they form an $\SO(3)\times\SO(3)$ \coa\ on
   $H_k$. We now show:

\begin{thm}\label{PQ-H}
The total space $H_k$ of the $\SO(3)$ principal orbifold bundle of
self dual 2-forms on $O_k$ is smooth and the \com s  $P_k$ and $Q_k$
are
     equivariantly diffeomorphic to  the (2-fold) universal
covers of  $H_{2k-1}$ and $H_{2k}$ respectively.
\end{thm}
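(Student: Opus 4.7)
My plan is to realize $H_k$ as a \coo\ manifold under $G=\S^3\times\S^3$, compute its group diagram, and identify it via the reconstruction principle with a canonical $\Z_2$-quotient of $P_{k'}$ (if $k$ is odd) or $Q_{k'}$ (if $k$ is even). The two commuting actions on $H_k$---the lift of the isometric $\SO(3)$-action from $O_k$ and the right principal bundle action of the structure group---give an ineffective $\S^3\times\S^3$ action with kernel $\Z_2\times\Z_2$. Since $O_k/\SO(3)=[0,L]$ and the structure group acts along the fibers, $H_k/G=[0,L]$, so $H_k$ is \coo\ with the same orbit interval as $O_k$.

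For smoothness, I would first establish that if a rotation acts on an oriented $4$-dimensional Euclidean space by weights $(a,b)$ on two oriented 2-planes, then it acts on the 2-plane in $\Lambda^2_+$ orthogonal to $\omega_1$ with weight $a+b$. Applied to the generator of the orbifold $\Z_k$ at $B_+$, which rotates the normal 2-plane by $2\pi/k$ and fixes the orbit-tangent 2-plane, this gives an order-$k$ rotation on the $\SO(3)$ fiber---a free action. Hence the $\Z_k$-quotient that defines $H_k$ near $B_+$ yields a smooth manifold.

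Next I would compute the group diagram. For the principal isotropy, $H\cong\Z_2\oplus\Z_2$ acts on $T_{\gamma(t)}\Sph^4$ by the standard Klein four representation; a direct calculation on the self-dual basis $\omega_i=e_0\wedge e_i+\tfrac12\varepsilon_{ijk}e_j\wedge e_k$ shows that the induced representation on $\Lambda^2_+$ is again the standard Klein four embedding into $\SO(3)$. The isotropy in $\SO(3)\times\SO(3)$ is therefore the diagonal $\Delta H$, whose preimage in $\S^3\times\S^3$ is the order-$16$ subgroup $\{(q_1,q_2)\in Q\times Q:q_2=\pm q_1\}$. At the smooth singular orbit $B_-$, the slice representation of $\Kmo$ has weights $(2,1)$ on the normal and orbit-tangent 2-planes (from the commutator action of $E_{12}$ in the basis \eqref{basis}); with the orientation of Section~4 this yields weight $1$ on $\Lambda^2_+$, and lifting to $\S^3\times\S^3$ gives the identity component of $\Km$ as the circle $(e^{it},e^{it})$, matching the $B_-$ slope in both the $P_k$ and $Q_k$ diagrams. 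At $B_+$ the same slice weights appear, but the $\Z_k$-orbifold twist forces the $\Kpo$-circle in the $\S^3_R$ factor to wrap by integer amounts depending on $k$; carefully tracking the winding yields slope $(m+2,-m)$ when the orbifold parameter $m$ is odd (matching the $B_+$ slope $(1+2k',1-2k')$ of $P_{k'}$ for $m=2k'-1$) and slope $(m/2,m/2+1)$ when $m$ is even (matching the $B_+$ slope $(k',k'+1)$ of $Q_{k'}$ for $m=2k'$).

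To finish, I would note that for each parity the order-$16$ principal isotropy contains a canonical index-$2$ subgroup compatible with the $\Kpm$ computed above---namely $\Delta Q$ when $m$ is odd and $\{(\pm 1,\pm 1),(\pm i,\pm i)\}$ when $m$ is even---and the corresponding 2-fold cover therefore has exactly the group diagram of $P_{k'}$ or $Q_{k'}$. By the reconstruction principle described in Section~\ref{general} it is equivariantly diffeomorphic to $P_{k'}$ or $Q_{k'}$; since these candidates are simply connected (as specified in Theorem~\ref{classify}), this cover is the universal one. The main obstacle is the slope computation at $B_+$: combining the slice-representation weights with the orbifold $\Z_k$-twist inside $\S^3\times\S^3$ and verifying the parity-sensitive identification is the technical core of the argument.
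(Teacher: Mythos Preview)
Your approach is essentially the paper's: compute the cohomogeneity-one group diagram of $H_k$ by tracking how the isotropy circles act on the tangent and normal $2$--planes and hence on $\Lambda^2_+$, then match with the $\Z_2$--quotients of $P_{k'}$ and $Q_{k'}$. The paper packages this slightly differently---it passes through the $\SO(4)$ frame bundle $P$, writes $H_k=P/\SU(2)_+$, and deduces smoothness from freeness of $\SU(2)_+$ on $P$---but this is the same content as your orbifold-group argument on the fibre.

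Where your proposal is genuinely incomplete is exactly where you flag it: the slope at $B_+$. The paper's device here is to model the orbifold tube $D(B_+)$ explicitly as $\SO(3)\times_{\Kp}D^2_+$ via a homomorphism $j_k\colon \Kp\to\SO(3)$ (with $\ker j_k\cong\Z_k$) together with the weight-$2$ slice action $\phi_2$. This turns your ``$\Z_k$-orbifold twist'' into a concrete weight computation: one reads off slopes $(1,-1,2)$ at $B_-$ and $(k,-k,-2)$ at $B_+$ inside the maximal torus of $\SO(3)\times\SO(4)$, then pushes through $\SO(4)\to\SO(3)\times\SO(3)$ via $(p,q)\mapsto(p+q,-p+q)$ to get $(1,1)$ and $(k,-(k+2))$. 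Without such a model your $B_+$ slope is only asserted, not derived.

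One smaller point: your rule ``weight on $\Lambda^2_+$ is $a+b$'' is consistent with your claimed weight $1$ at $B_-$ only if the weights are \emph{signed} (tangent $-1$, normal $+2$), not $(2,1)$ as you wrote. The paper tracks these signs carefully (``negative weights on $T_\pm$ \ldots\ on $T^\perp_-$ the weight is positive, on $T^\perp_+$ negative''), and this orientation bookkeeping is what distinguishes $\Lambda^2_+$ from $\Lambda^2_-$---and hence $P_k,Q_k$ from the anti-self-dual side, which the paper notes yields $B^7$ and $R$ instead.
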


\begin{proof} Recall that for the $\SO(3)$ \coo\ action on $\Sph^4$ in
Section 2, the isotropy groups are given by $H=\Z_2\oplus\Z_2
\subset\{\O(2) ,\O'(2)\}\subset\SO(3)$ where the two singular
isotropy groups are embedded in two different blocks. Since the
metric on $O_k$ is smooth near $B_-$, it follows that we still have
$\Km\cong \O(2)$, which we can assume is embedded in the upper
block, and hence $H\cong \Z_2\oplus\Z_2$ embedded as the set of
diagonal matrices in $\SO(3)$.  The normal angle along $B_+$ is
$2\pi/k$ and a neighborhood of $B_+$ can be described as follows.
The homomorphism $\phi_k\colon \SO(2)\to \SO(2)$, $\phi_k(A)= A^k$
gives rise to a homomorphism
 $j_k\colon \Kpo\simeq\SO(2) \to \SO(3)$, where $A\in \Kpo$ goes to
  $\phi_k(A)$
 followed by an embedding into the lower block of
 $\SO(3)$. We can now  define $\Kp=\O(2)$ for $k$
 odd, $\Kp=\O(2)\times\Z_2$
  for $k$ even and extend the homomorphism to $j_k\colon \Kp \to \SO(3)$
  such that  $\diag(1,-1)\in\O(2)$ goes to $\diag(-1,1,-1)$
  and the non-trivial element in $\Z_2$ goes to $\diag(1,-1,-1)$ when $k$ is even.
A neighborhood of the singular orbit on the right is then given by
     $D(B_+)=\SO(3)\times_{\Kp} D_+^2$
where $\Kp$ acts on $\SO(3)$ via $j_k$, $\Kpo$ acts on  $D_+^2$ via
$\phi_2$, $\diag(1,-1)\in\O(2)$ acts as a reflection, and $\Z_2$
acts trivially. Indeed, we then have $ \SO(3)\times_{\Kp} D_+^2=
\SO(3)\times_{(\Kp /\ker j_k)} (D_+^2/\ker j_k)$ with singular orbit
$\SO(3)/\O(2)$ and normal disk $D_+^2/\ker j_k=D_+^2/\Z_k$.
Furthermore, $\partial D(B_+)=\SO(3)\times_{\Kp} \Sph^1_+=\SO(3)/H$.

%\smallskip

  The vector bundle of self dual two forms can  be
viewed as follows: Let P be the $\SO(4)$ principal bundle of
oriented orthonormal frames in the orbifold tangent bundle of $O_k$.
This frame bundle is a smooth manifold since the finite isometric
orbifold groups act freely on frames. $\SO(4)$ has two normal
subgroups $\SU(2)_-$ and $\SU(2)_+$, given by left and right
multiplication of unit quaternions, with
$\SO(4)/\SU(2)_\pm\simeq\SO(3)$. The $\SO(3)$ principal bundles
$P/\SU(2)_+$ and $P/\SU(2)_-$ are then the principal bundles for the
vector bundle of self dual and the vector bundle of anti self dual 2
forms. This is due to the fact that the splitting $\Lambda^2V\cong
\Lambda_-^2V\oplus\Lambda_+^2V$ for an oriented four dimensional
vector space corresponds to the splitting of Lie algebra ideals
$\fso(4)\cong \fso(3)\oplus\fso(3)$ under the isomorphism
$\Lambda^2V\cong\fso(4) $. Alternatively, we can first project under
the two fold cover $\SO(4)\to\SO(3)\SO(3)$ and then divide by one of
the $\SO(3)$ factors. The action of $\SO(4)$ on $P$ is only almost
free since $P/\SO(4)=O_k$ but we will show that both $\SU(2)_+$ and
$\SU(2)_-$ act freely on $P$, or equivalently, each $\SO(3)$ factor
in $\SO(3)\SO(3)$ acts freely on $P/\{-\Id\}$. This then implies
that $P/\SU(2)_+=H_k$ is indeed a smooth manifold.

\smallskip

The description of the disc bundle $D(B_+)$ gives rise to a
description of the corresponding  $\SO(4)$ frame bundle
$\SO(3)\times_{\Kp} \SO(4)$ where the action of $\Kp$ on $\SO(3)$ is
given by $j_k$ as above, and the action of $\Kpo$ on $\SO(4)$ is
given via $\SO(2)\subset\SO(4)\colon A\in\SO(2)\to
(\phi_k(A),\phi_2(A))$ acting on the splitting $T_+\oplus T^\perp_+$
into tangent space and normal space of the singular orbit. Similarly
for the left hand side where $k=1$.  On the left hand side  the
$X_1^*$ direction collapses, $T_-$ is oriented by $X_2^*,X_3^*$  and
$T^\perp_-$ by $\gamma'(0),X_1^*$.  On the right hand side the
$X_2^*$ direction collapses, $T_+$ is oriented by $X_3^*,X_1^*$ and
$T^\perp_+$ by $ \gamma'(L),X_2^*$. Furthermore,
$\SO(2)\subset\O(2)$ has  negative weights on $T_\pm$, where we have
endowed the isotropy groups on the left and on the right with
orientations induced by $X_1$ and $X_2$ respectively. Indeed,
$[E_{12},E_{13}]=-E_{23}$ on the left and $[E_{13},E_{23}]=-E_{12}$
on the right. On $T^{\perp}_-$, the weight is positive, and on
$T_+^\perp$ negative. Hence $\Kpmo\subset\SO(3)\SO(4)$ sits inside
the natural maximal torus in $\SO(3)\SO(4)$ with slopes $(1,-1,2)$
on the left, and $(k,-k,-2)$ on the right. To make this precise
metrically, we can choose as a metric on $H_k$, the natural
connection metric induced by the Levi Cevita connection on $O_k$. A
parallel frame is then a geodesic in this metric. By equivariance
under $H$, the unit vectors $X_i^*/|X_i^*|$ form such a parallel
frame.

 Under the homomorphism
$\SO(4)\to\SO(3)\SO(3)$ and the natural maximal tori in $\SO(4)$ and
in $\SO(3)\SO(3)$, a slope $(p,q)$ circle goes into one with slope
$(p+q,-p+q)$. Hence the slopes of $\Kpmo$ in $\SO(3)\SO(3)\SO(3)$
are $(1,1,3)$ on the left, and $(k,-(k+2),k-2)$ on the right. This
also implies that the second and third $\SO(3)$ factor each act
freely on $P/\{-\Id\}$. Here we have used the fact that we already
know that $\SO(4)$, and thus each $\SO(3)$, acts freely on the
regular part and hence freeness only needs to be checked in $\Kpo$.
Notice also that for $k$ even, all slopes in $\Kpo$ should be
divided by 2 to make the circle description effective. If we divide
by the third $\SO(3)$ to obtain $H_k$, the slopes are $(1,1)$ on the
left and $(k,-(k+2))$ on the right. Finally, notice that the
principle isotropy group of the $\SO(3)\SO(3)$ action on $H_k$ is
again $\Z_2\oplus\Z_2$ since this is true for the $\SO(3)$ action on
$O_k$ and $\SO(4)$ acts freely on the regular points in $P$. This
determines the group diagram. For $k=2m-1$, it is the group diagram
of the two fold subcover of $P_m$ obtained by dividing
$G=\S^3\times\S^3$ by its center. For $k=2m$, this is the group
picture of the two fold subcover of $Q_m$ obtained by adding a
component to all 3 isotropy groups (generated e.g. by $(j,j)$). This
finishes our proof.
\end{proof}

{\it Remarks. } (a) The proof also shows that the $\SO(3)$ principal
bundles $P/\SU(2)_-$ corresponding to the vector bundle of anti-self
dual two forms is smooth and has slopes $(1,3)$ on the left and
$(k,k-2)$ on the right.  Note that in the case of $k=3$ one obtains
the slopes for the exceptional manifold $B^7$ and in the case of
$k=4$ the ones for $R$ (up to 2-fold covers).

\smallskip

(b)  In the case of $k=2\ell$, we can regard $O_k$ as an orbifold
metric $O_{\ell}$ on $\CP^2$. In this case it follows that the
$\SO(3)$ principal bundle of the  bundle of self dual two forms is
$Q_\ell$ itself.

\bigskip

We now explain the relationship to the Hitchin metrics. Recall that
a metric on $M$ is called 3-Sasakian if $G=\SU(2)$ or $G=\SO(3)$
acts isometrically and almost freely with totally geodesic orbits of
curvature 1. Moreover, for $U$ tangent to the $\SU(2)$ orbits and
$X$ perpendicular,  $X\wedge U$ is required to be
 an eigenvector
of the curvature operator $\hat{R}$  with eigenvalue 1, in
particular  the sectional curvatures $\sec(X,U)$ are equal to 1. In
the case we are interested in, where the dimension of $M$ is $7$,
the quotient $B=M^7/G$   is 4-dimensional and its induced metric is
self-dual Einstein with positive scalar curvature, although it is in
general only an orbifold metric. Recall that a metric is called self
dual if the curvature operator satisfies $\hat{R}\circ
\star=\star\circ\hat{R}$.
 Conversely, given a self-dual Einstein
  orbifold metric on $B^4$ with positive scalar
curvature,  the $\SO(3)$ principal orbifold bundle of self dual
2-forms on $B^4$ has a 3-Sasakian orbifold metric  given by the
naturally defined Levi Cevita connection metric. See \cite{BG} for a
survey on this subject.

Recall that $\Sph^4$ and $\CP^2$, according to Hitchin,  are the
only smooth self dual Einstein 4-manifolds. The 3-Sasakian metrics
they give rise to are the metric on $\Sph^7(1)$ in the first case,
and in the second case  the metric on the Wallach space $W^7_{(2)}$
described in Section 2. However, in the more general context of
orbifolds, Hitchin constructed in \cite{Hi1} a sequence of self dual
Einstein orbifolds $O_k$ homeomorphic to $\Sph^4$, one for each
integer $k>0$. The metric is invariant under the \coa\ by $\SO(3)$
from Section 2  and has an orbifold singularity as in the orbifold
$O_k$ discussed earlier. The cases of $k=1,2$ correspond to the
smooth standard metrics
 on $\Sph^4$ and on
$\CP^2$ respectively. Hence the Hitchin metrics give rise to
3-Sasakian orbifold metrics on the seven dimensional orbifold
$H^7_k$. Here one needs to check that the orientation we chose above
agrees with the orientation in \cite{Hi1}. As we saw in \tref{PQ-H},
$H_k$ is actually smooth and the 3-Sasakian metric, as a quotient of
the smooth connection metric on the principal frame bundle, is also
smooth. Thus our candidates $P_k$ and $Q_k$ all admit a smooth
3-Sasakian metric. In the context of 3-Sasakian geometry, the
examples $P_k$ are particularly interesting since they are two
connected, and so far, the only known 2-connected example in
dimension 7, was $\Sph^7$.

\smallskip

It was shown by O.Dearricott in \cite{De} (see also \cite{CDR}) that
a 3-Sasakian metric, scaled down in direction of the principal
$G=\SO(3)$ or $\SU(2)$ orbits, has positive sectional curvature if
and only if the self dual Einstein orbifold base has positive
curvature. It is therefore interesting to examine the curvature
properties of the Hitchin metrics, which we will now discuss
shortly. The metric is described by the 3 functions
$T_i(t)=|X^*_i(\gamma(t))|^2$ along a normal geodesic $\gamma$,
since invariance under the isotropy group implies that these vectors
are orthogonal. It turns out that in order to solve the ODE along
$\gamma$ given by the condition that the metric is self dual
Einstein, it is convenient to change the arc length parameter from
$t$ to $r$. The metric is thus described by
$$g_{\gamma(r)}=f(r)dr^2+T_1(r)d\theta_1^2+T_2(r)d\theta_2^2+T_3(r)d\theta_3^2,
$$
where $d\theta_i$ is dual to $X_i$. In order to solve the ODE,
Hitchin uses complex algebraic geometry on the twistor space of
$O_k$. For general $k$, the solutions are explicit only in principal
and it is thus a tour de force to prove the required smoothness
properties of the metric. For small values of $k$ though, one finds
explicit solutions in \cite{Hi1} and \cite{Hi2}:

\bigskip

{ \it  Example 1. } The first non-smooth example is the Hitchin
metric with normal angle $2\pi/3$. Here the functions are algebraic:

\begin{align*}
T_1&=\frac{80r^2(r^6-2r^5-5r^4-15r^3-20r^2+13r+4)}{(3r^3+7r^2+r+1)^2
(3r^3-13r^2+r+1)},\\
 T_2&=\frac{5r(3r-1)(r-\beta )(r+\beta
+2)(2-3r-r^2+r^3+5\beta r)^2}{(3r^3+7r^2+r+1)^2(r^2+r-1)(r^2+r+4)}
                        ,\\
T_3&=  \frac{5r(3r-1)(r+\beta )(r-\beta +2)(2-3r-r^2+r^3-5\beta
r)^2}{(3r^3+7r^2+r+1)^2(r^2+r-1)(r^2+r+4)}                  ,\\
 f &=\frac{
5(3r-1)(r^2+r+4)(r+1)^2}{(r+r^2-1)/(3r^3+7r^2+r+1)^2},
\end{align*}
with  $\beta=\sqrt{\frac {r+r^2-1} r}$ and $\frac {\sqrt{5}-1} 2 \le
r \le 1$.

\bigskip

{ \it  Example 2. } The simplest example of a non-smooth Hitchin
metric has normal angle $2\pi/4$, where the functions are given by:

$$T_1=          \frac{(1-r^2)^2}{(1+r+r^2)(r+2)(2r+1)}    \; , \;
T_2=     \frac{1+r+r^2}{(r+2)(2r+1)^2}   \; , \;
T_3=\frac{r(1+r+r^2)}{(r+2)^2(2r+1)},$$
$$
f=\frac{1+r+r^2}{r(r+2)^2(2r+1)^2},$$ with $1\le r <\infty$.

\bigskip

{ \it  Example 3. } Finally, we have the Hitchin metric with normal
angle $2\pi/6$:

\begin{align*}T_1&=\frac{(3r^2+2r+1)(r^2+2r-1)^2(r^2-2r+3)(r^2+1) } {
(3r^2-2r+1)(r^2-2r-1)^2(r^2+2r+3)^2},\\
T_2&=\frac{ (3r^2-2r+1)(r^2-2r+3)(r+1)^3(r-1)}{
(3r^2+2r+1)(r^2+2r+3)^2(r^2-2r-1)},\\
 T_3&=\frac{
-4(3r^2-2r+1)(3r^2+2r+1)r}{
(r^2+2r+3)^2(r^2-2r-1)(r^2-2r+3)},\\
f&=\frac{(r+1)(r^2-2r+3)(3r^2+2r+1)(3r^2-2r+1) }{
r(1-r)(r^2-2r-1)^2(r^2+1)(r^2+2r+3)^2 },
\end{align*}
with $\sqrt{2}-1\le r \le 1$.

\bigskip

Although one can in principle use the methods in \cite{Hi1} to
determine the functions for larger values of $k$, they quickly
become even more complicated. The above 3 cases are sufficient
though to understand the behavior in general. If the functions are
given in arc length parameter, one has
$\sec(\gamma',X_i^*)=-f_i''/f_i$, where $f_i=\sqrt{T_i}$, and hence
positive curvature is equivalent to the concavity of $f_i$. In
Figures 8-10 we therefore have drawn a graph of the length functions
$f_i$ in arc length parameter, together with a graph of
$\sec(\gamma',X_i^*)$. The pictures are similar, but notice the
difference in scale. One sees that the non-smooth singular orbit
must occur at $t=L$ since it is necessarily totally geodesic, which
implies that the non-collapsing functions have 0 derivative. The
pictures  show that the function $f_1$, which vanishes at the smooth
singular orbit, is concave,
 whereas the other two are slightly convex near the smooth singular orbit.

\smallskip

For each of the 3 elements $g\in H$, the fixed point set of $g$ is a
2-sphere, since this is clearly true for the linear action on
$\Sph^4$ corresponding to $k=1$. They are isometric to each other
via an element of the Weyl group. Since the circle that commutes
with $g$ acts by isometries on the 2-sphere, it is rotationally
symmetric with an orbifold point at one of the poles. It has
positive curvature, except in a small region two thirds toward this
pole. Figure 11 shows the length of the action field induced by the
circle action on this 2-sphere (which is equal to $f_i/2$) in the
case of $k=3$ and $k=6$. Notice that it extends from $0$ to $3L$.
These 2-spheres can also be isometrically embedded as surfaces of
revolution in $3$-space, which we exhibit in Figure 12.

\smallskip

One can show that, as a consequence of being self dual Einstein,
$\sec(X^*_i,X^*_j)=\sec(\gamma',X_k^*)$, when $i,j,k$ are distinct,
and that if all 3 are positive, the  curvature of any 2-plane is
indeed positive also. Thus the Hitchin metric has positive curvature
wherever the above orbifold 2-sphere has positive curvature.  By
Dearricott's theorem, this implies that the induced 3-Sasakian
metric on our candidates, scaled down in direction of the principal
$\SO(3)$ orbits,  has \psc\ on half of the manifold.

Thus this metric does not yet give the desired metrics of positive
curvature on $P_k$ and $Q_k$. It is also tempting to think that, as
in the case of the known actions in Section 2, simple trigonometric
expressions for the 9 functions describing a metric  on $P_k$, $Q_k$
or $R$ might yield a metric with \pc\ on our candidates. But this
does not seem to be the case either, since already the smoothness
conditions and simple necessary convexity properties require
trigonometric functions that are quite complicated.

\smallskip

It is  intriguing that the (non-compact) space of 2-monopoles
studied by Atiyah and Hitchin in \cite{AH} has surprisingly similar
properties to the above metric. It carries a self dual Einstein
orbifold metric which in this case is Ricci flat, i.e., is
Hyperk\"ahler. It is invariant under $\SO(3)$ with principal orbits
$\SO(3)/\Z_2\oplus\Z_2$ and a singular orbit $\RP^2$ with normal
angle $2\pi/k$. Of the 3 functions describing the metric, one is
concave as well, and the other two are not. Thus one of the fixed
point sets of elements in $H$ is a (non-compact) surface of
revolution with positive curvature.

 \providecommand{\bysame}{\leavevmode\hbox
to3em{\hrulefill}\thinspace}

\psfrag{f_1}{$\scriptstyle f_1$}\psfrag{f_2}{$\scriptstyle
f_2$}\psfrag{f_3}{$\scriptstyle f_3$} \psfrag{g_1}{$\scriptstyle
g_1$}\psfrag{g_2}{$\scriptstyle g_2$}\psfrag{g_3}{$\scriptstyle
g_3$} \psfrag{h_1}{$\scriptstyle h_1$}\psfrag{h_2}{$\scriptstyle
h_2$}\psfrag{h_3}{$\scriptstyle h_3$}
\psfrag{f_1=f_2=f_3}{$\scriptstyle f_1=f_2=f_3$}\psfrag{t}{$$}
\begin{figure}[h]
\begin{center}
\includegraphics[width=3.1in,height=3in,angle=-90]{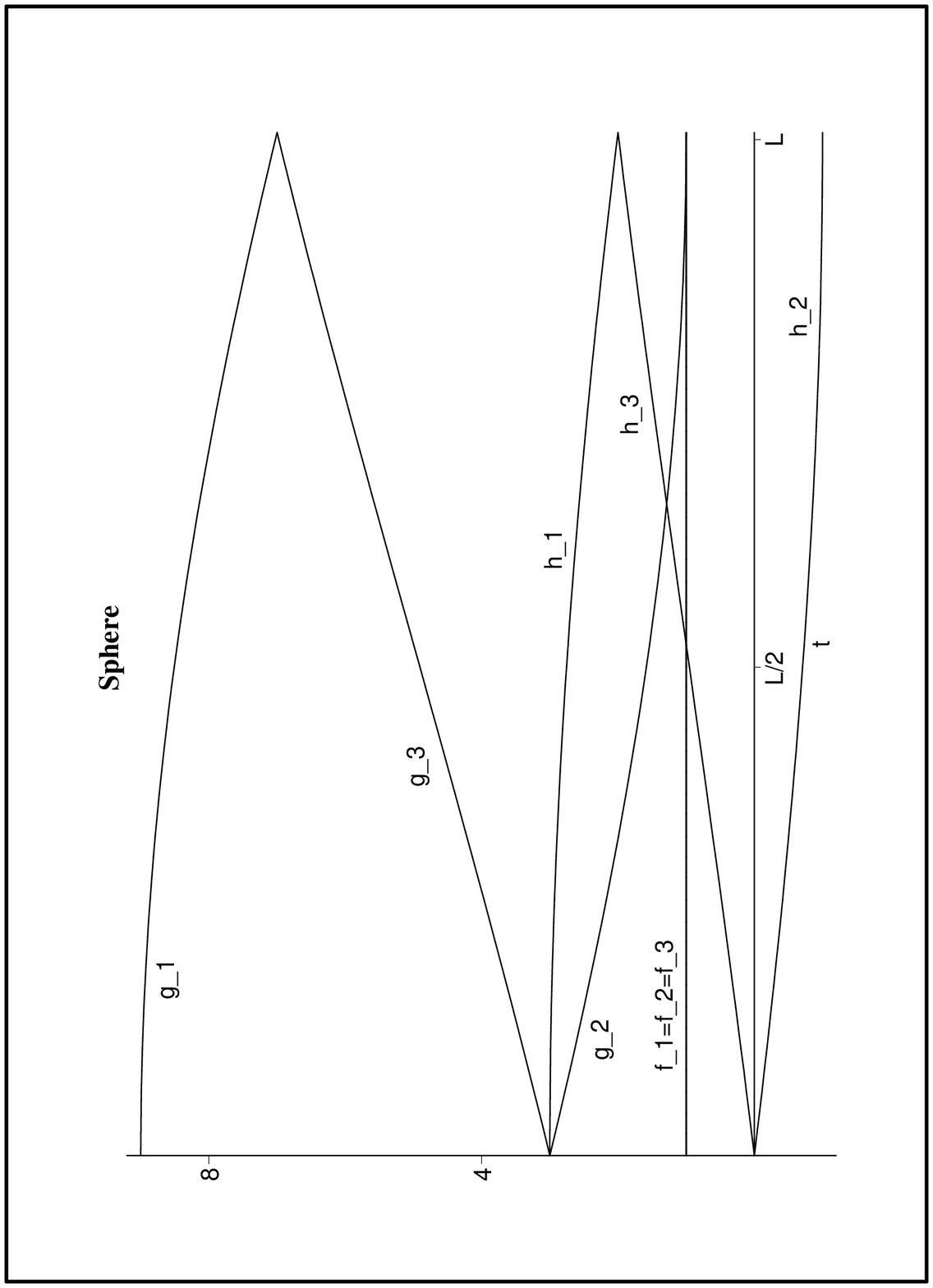}\qquad
\includegraphics[width=3.1in,height=3in,angle=-90]{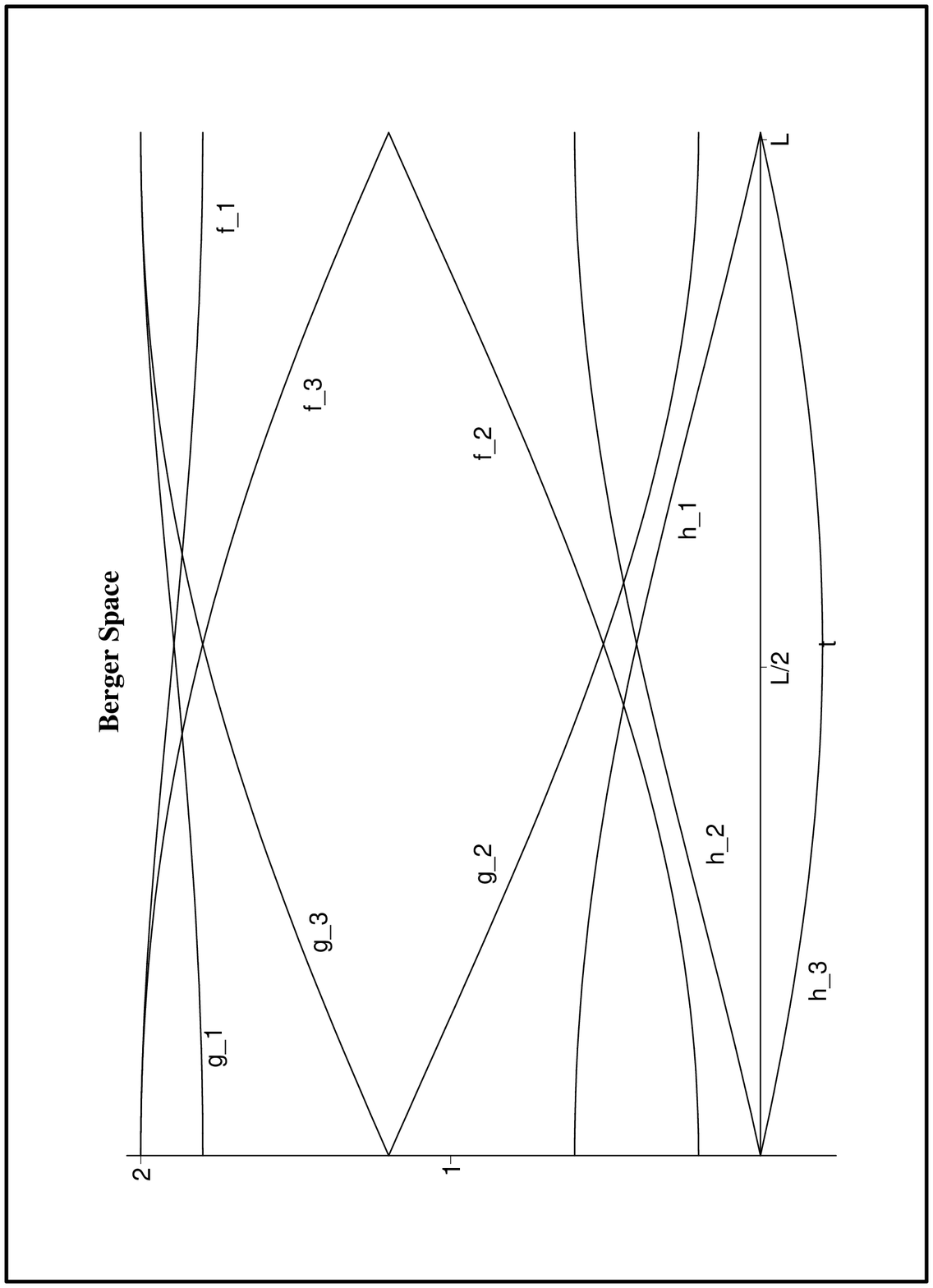}
\end{center}
\caption{All 9 functions on $[0,L]$.}
\end{figure}

\begin{figure}[h]
\begin{center}
\includegraphics[width=3.1in,height=3in,angle=-90]{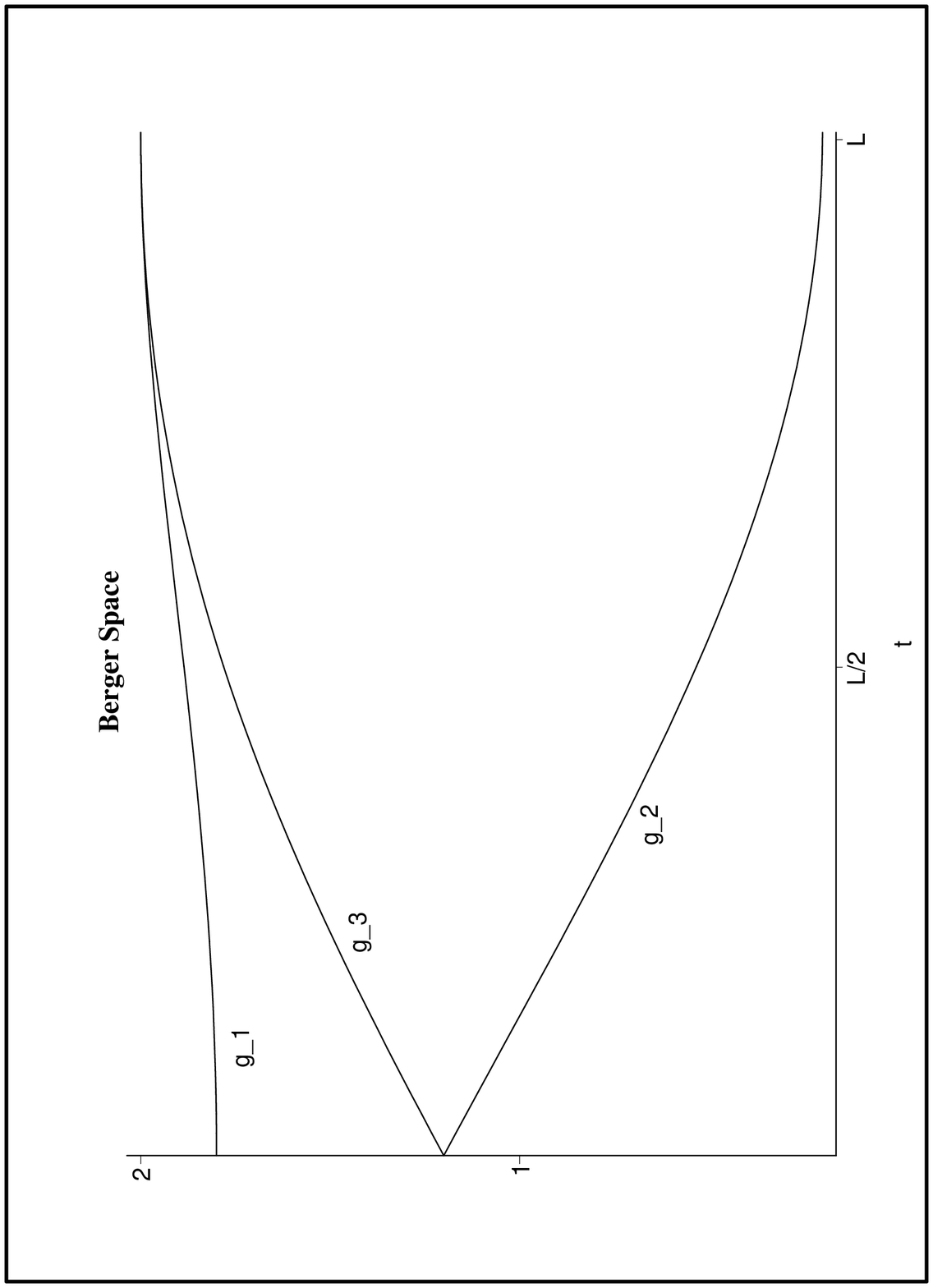}\qquad
\includegraphics[width=3.1in,height=3in,angle=-90]{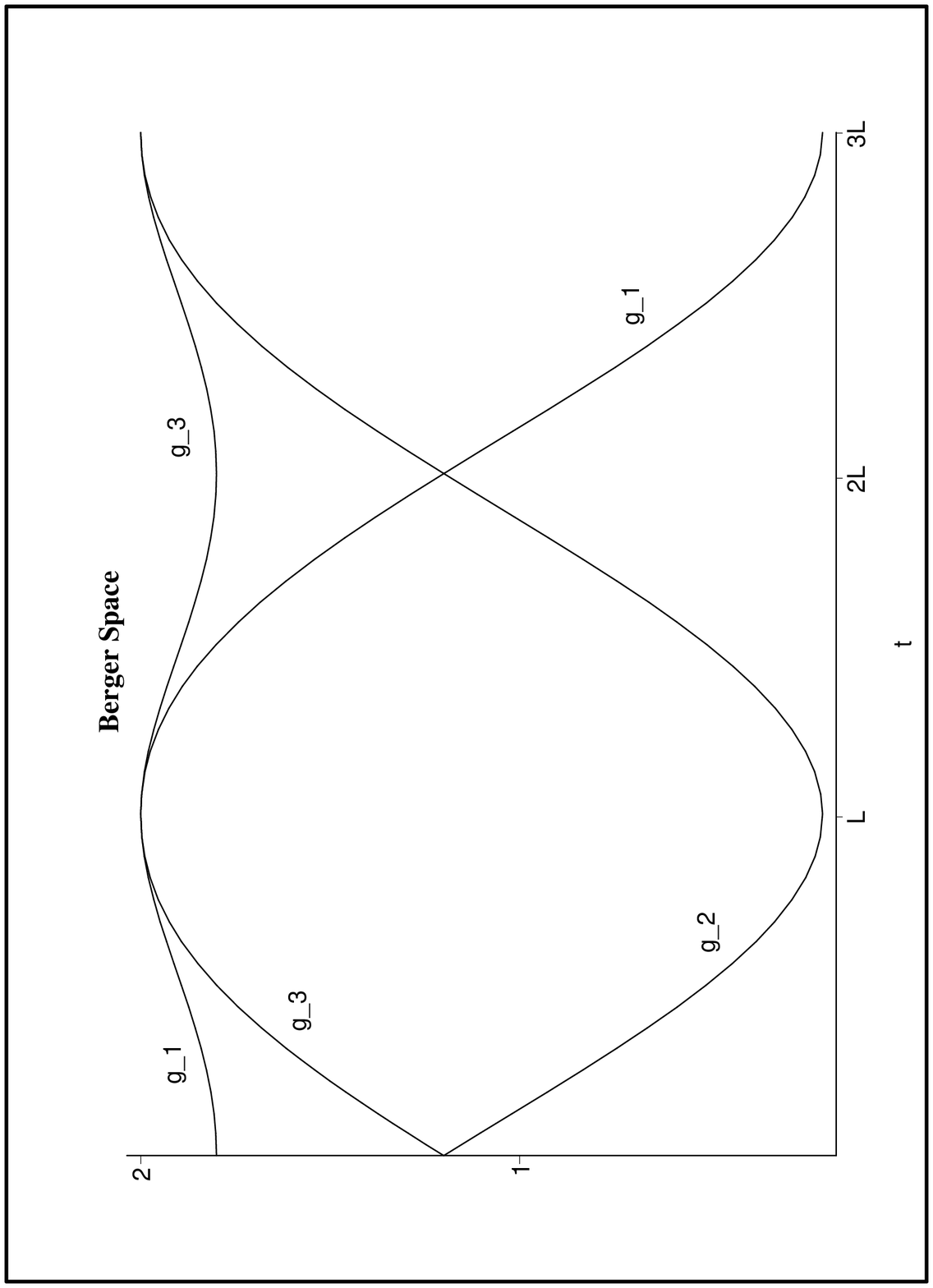}
\end{center}
\caption{The  $g$ functions on $[0,L]$ and $[0,3L]$.}
\end{figure}

\psfrag{f}{$\scriptstyle f$}\psfrag{g}{$\scriptstyle
g$}\psfrag{h}{$\scriptstyle h$}
\begin{figure}[h]
\begin{center}
\includegraphics[width=3.1in,height=3in,angle=-90]{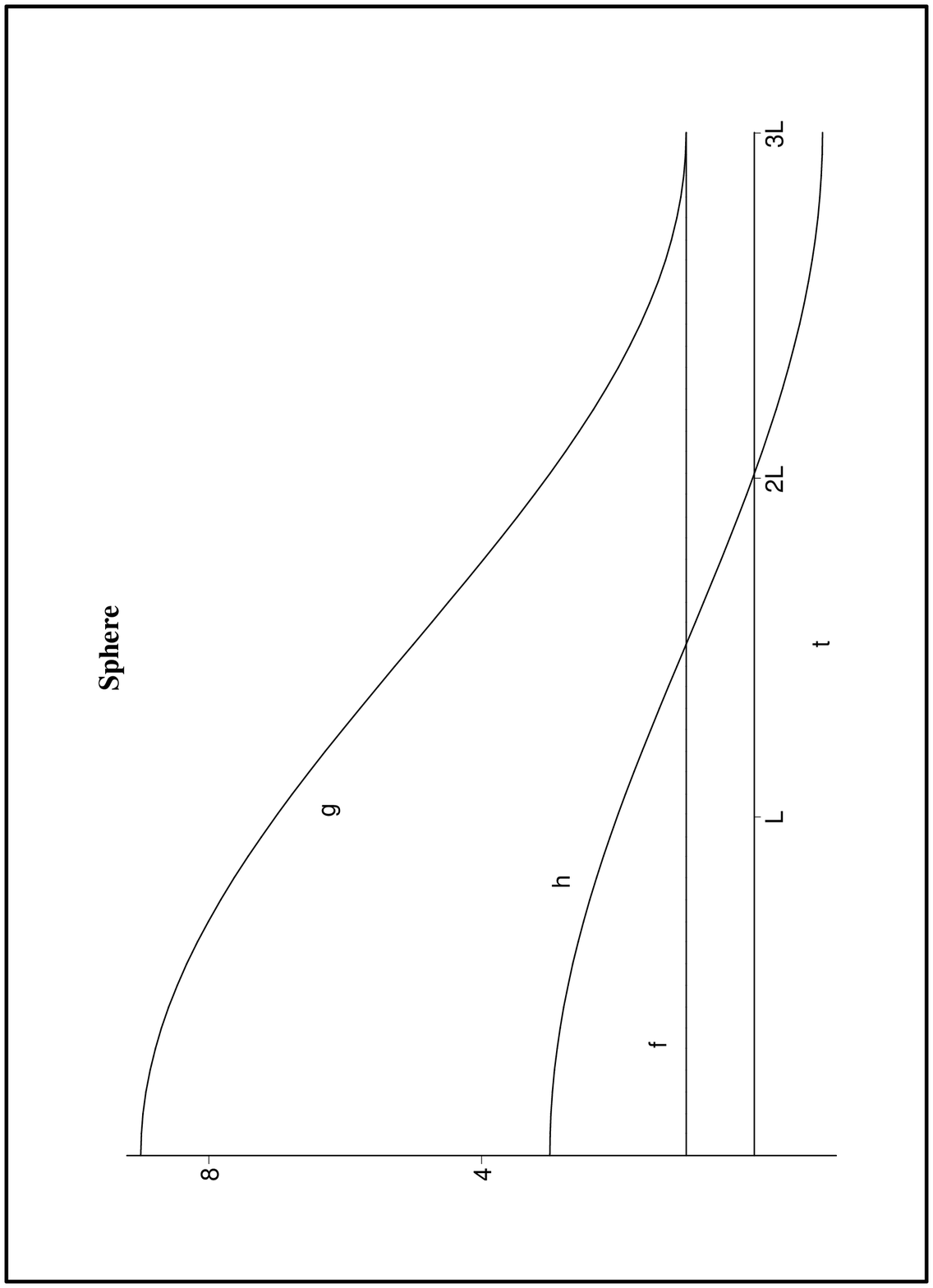}\qquad
\includegraphics[width=3.1in,height=3in,angle=-90]{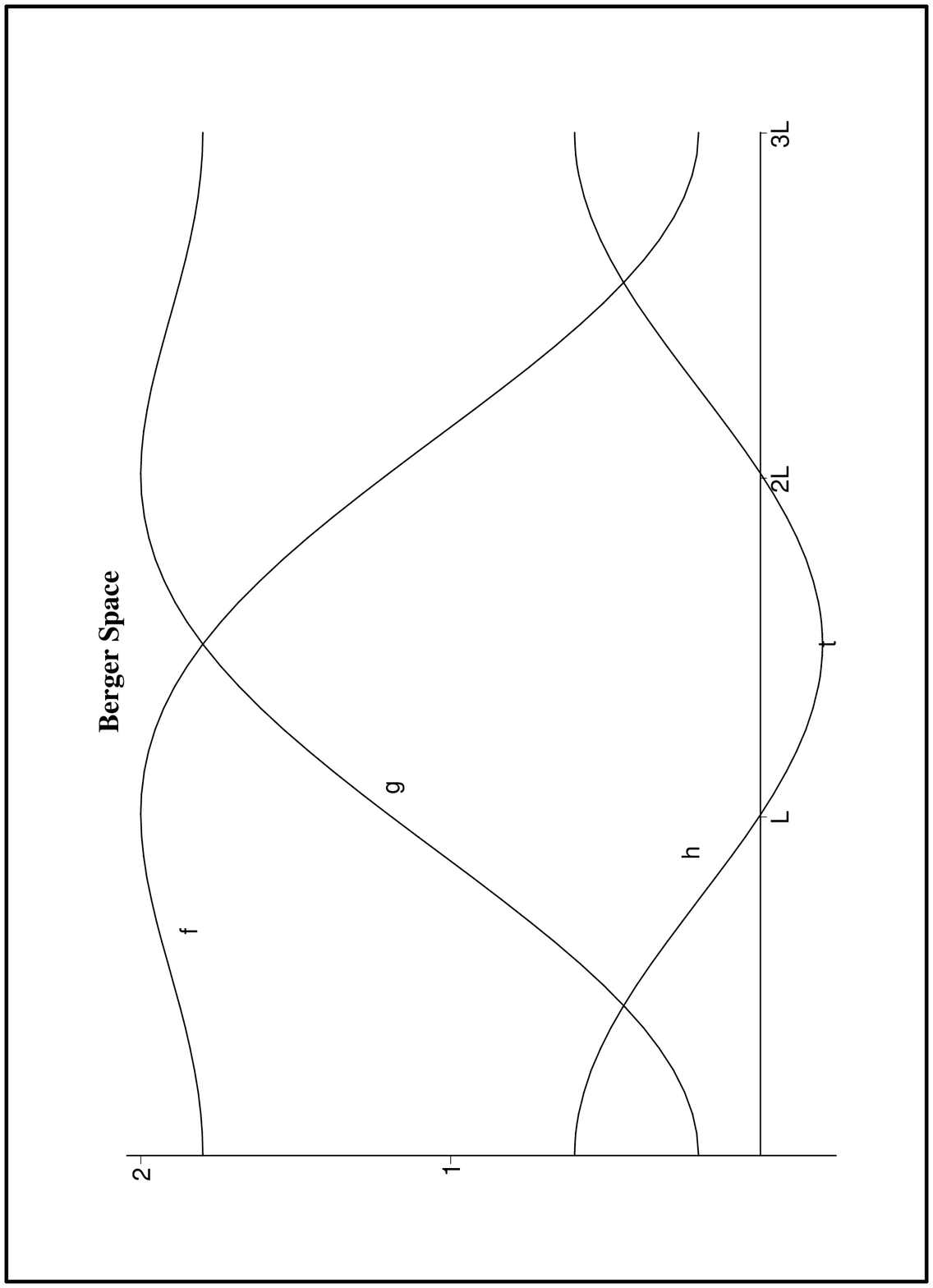}
\end{center}
\caption{  All functions on $[0,3L]$.}
\end{figure}

\psfrag{F_1}{$\scriptstyle F_1$}\psfrag{G_1}{$\scriptstyle
G_1$}\psfrag{H_1}{$\scriptstyle H_1$}
\begin{figure}[h]
\begin{center}
\includegraphics[width=3.1in,height=3in,angle=-90]{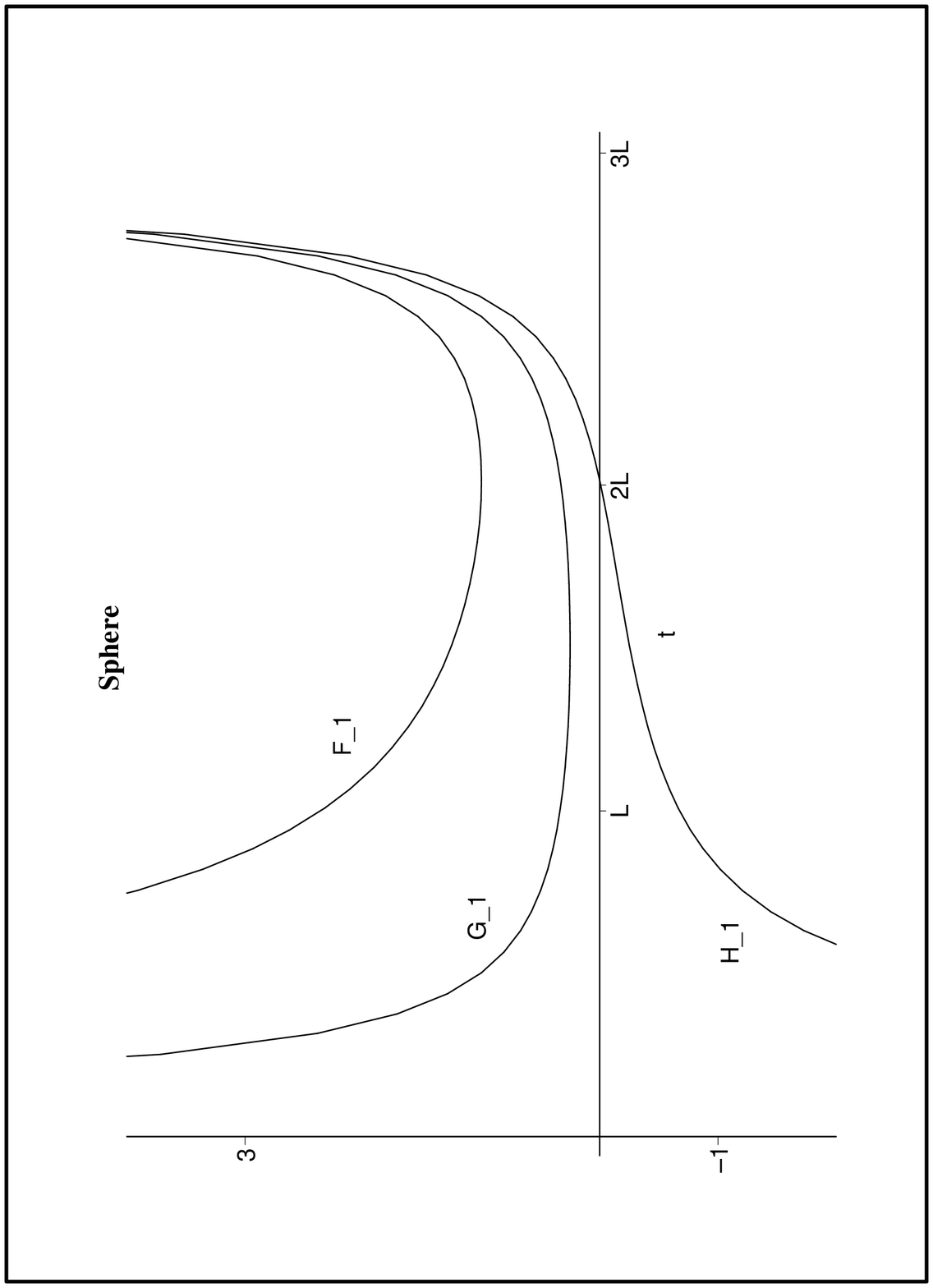}\qquad
\includegraphics[width=3.1in,height=3in,angle=-90]{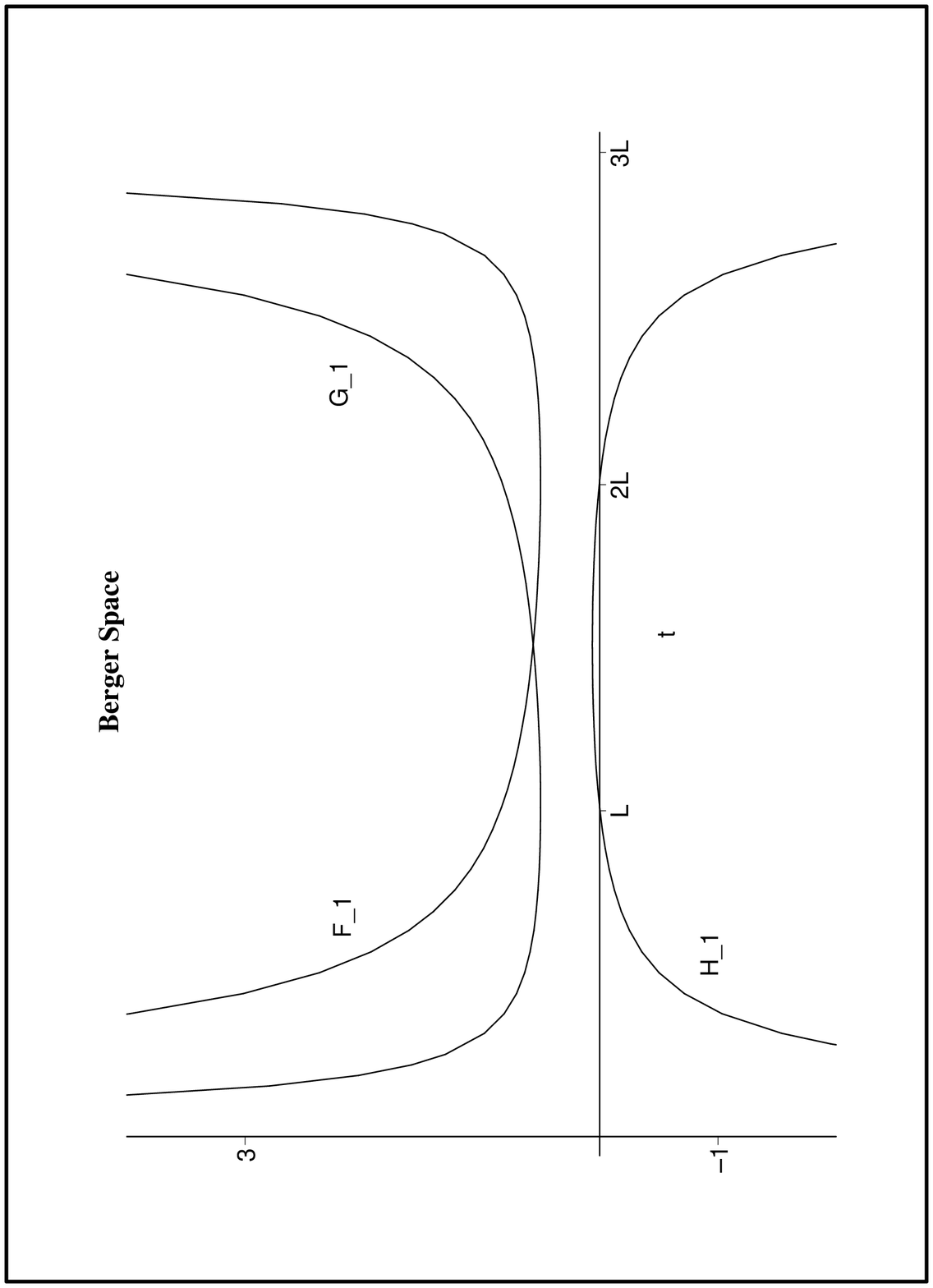}
\end{center}
\caption{The  inverse functions on $[0,3L]$.}
\end{figure}

\psfrag{f_1}{$\scriptstyle f_1$}\psfrag{f_2}{$\scriptstyle
f_2$}\psfrag{f_3}{$\scriptstyle f_3$}
 \psfrag{h_1}{$\scriptstyle
h_1$}\psfrag{h_2=h3}{$\scriptstyle
h_2=h_3$}\psfrag{g_2=g_3}{$\scriptstyle g_2=g_3$}
\psfrag{g_1=g_2=g_3}{$\scriptstyle g_1=g_2=g_3$}
\psfrag{f_2=f_3}{$\scriptstyle f_2=f_3$} \psfrag{g_1}{$\scriptstyle
g_1$}
\begin{figure}[h]
\begin{center}
\includegraphics[width=3.1in,height=3in,angle=-90]{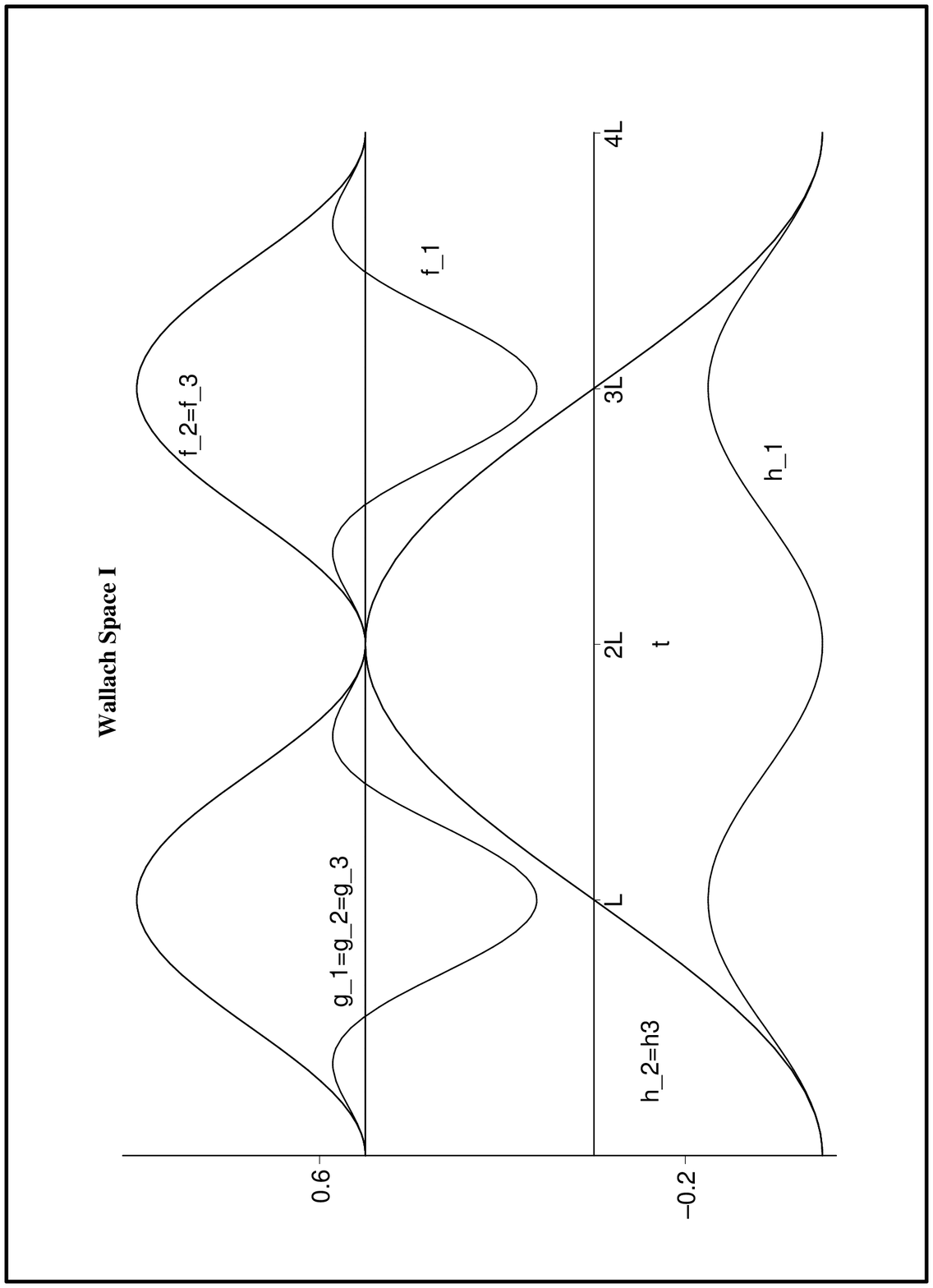}\qquad
\includegraphics[width=3.1in,height=3in,angle=-90]{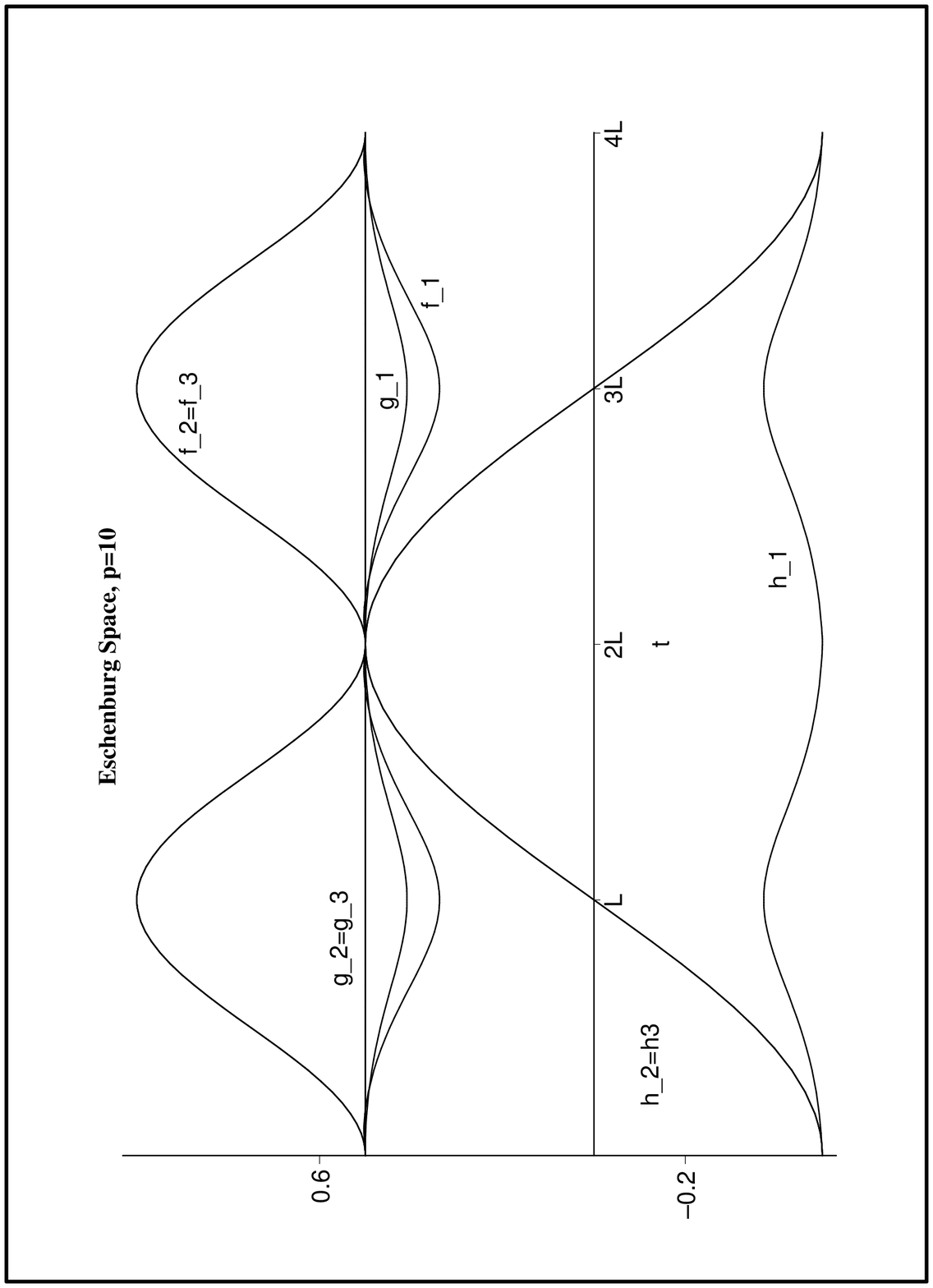}
\end{center}
\caption{Wallach space $W^7_{(1)}$ and Eschenburg space $E_{10}$ on
$[0,4L]$.}
\end{figure}

\begin{figure}[h]
\begin{center}
\includegraphics[width=3.1in,height=3in,angle=-90]{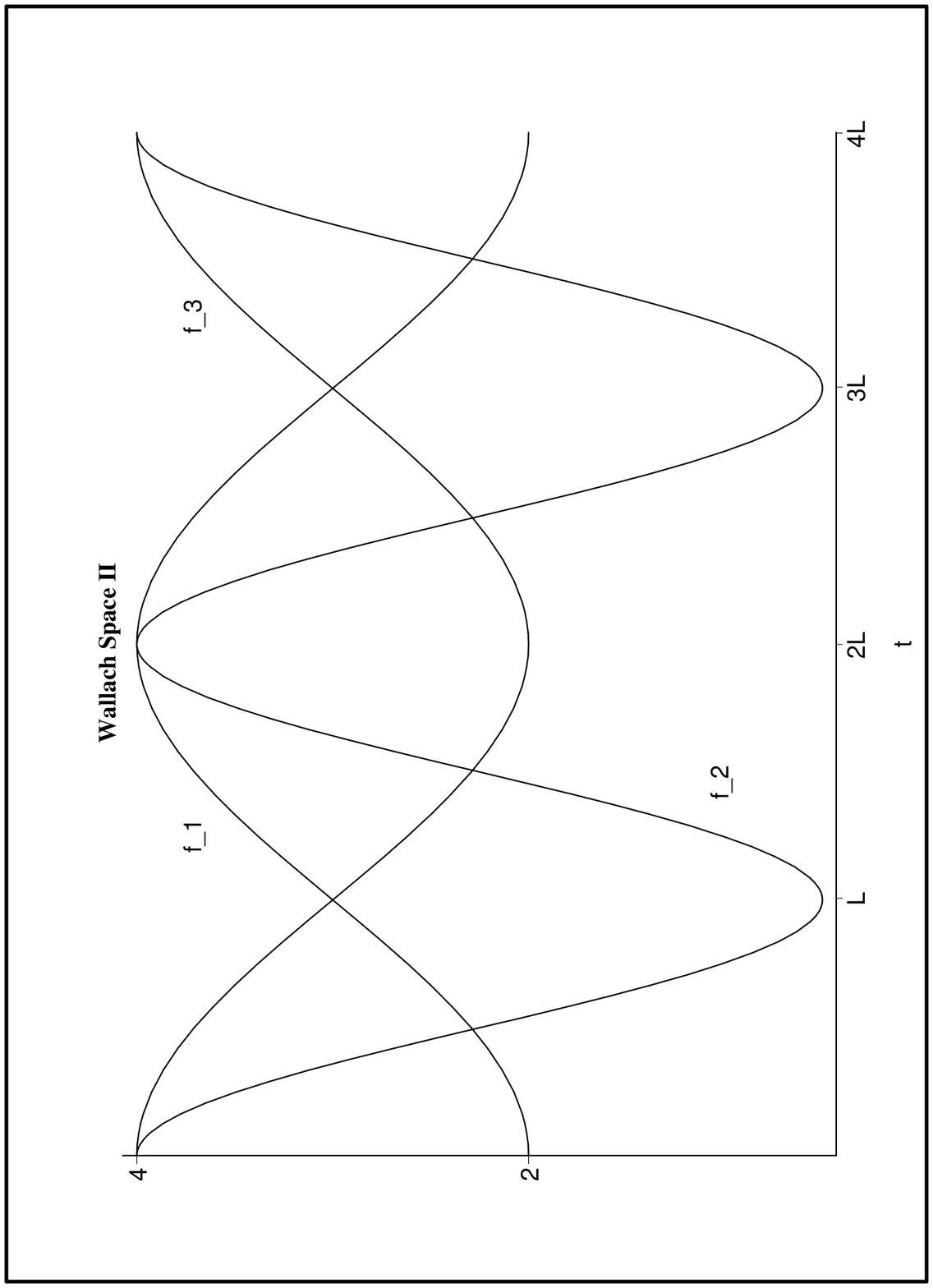}\qquad
\includegraphics[width=3.1in,height=3in,angle=-90]{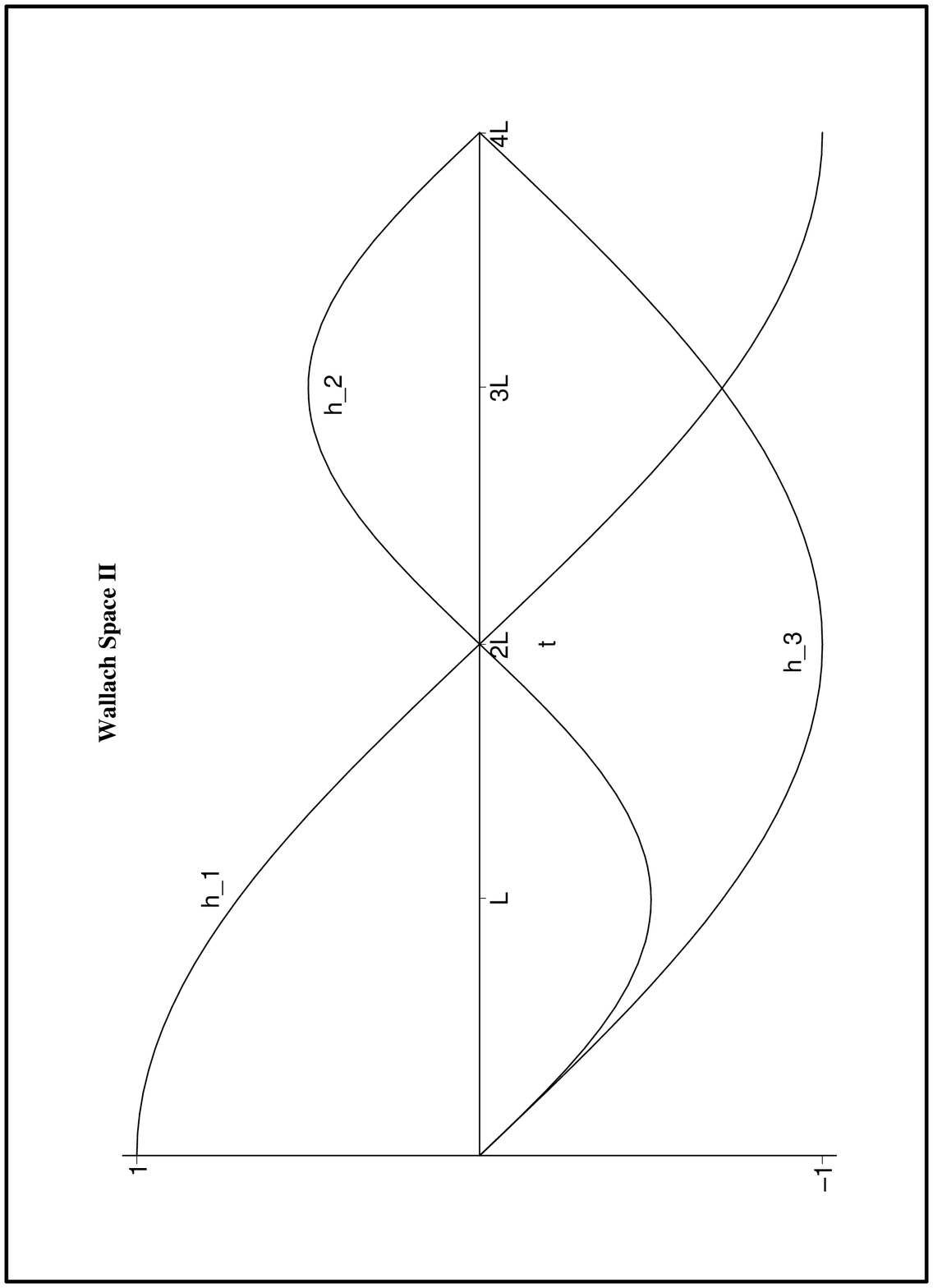}
\end{center}
\caption{Wallach space $W^7_{(2)}$  on $[0,4L]$.}
\end{figure}

\begin{figure}[h]
\begin{center}
\includegraphics[width=3.1in,height=3in,angle=-90]{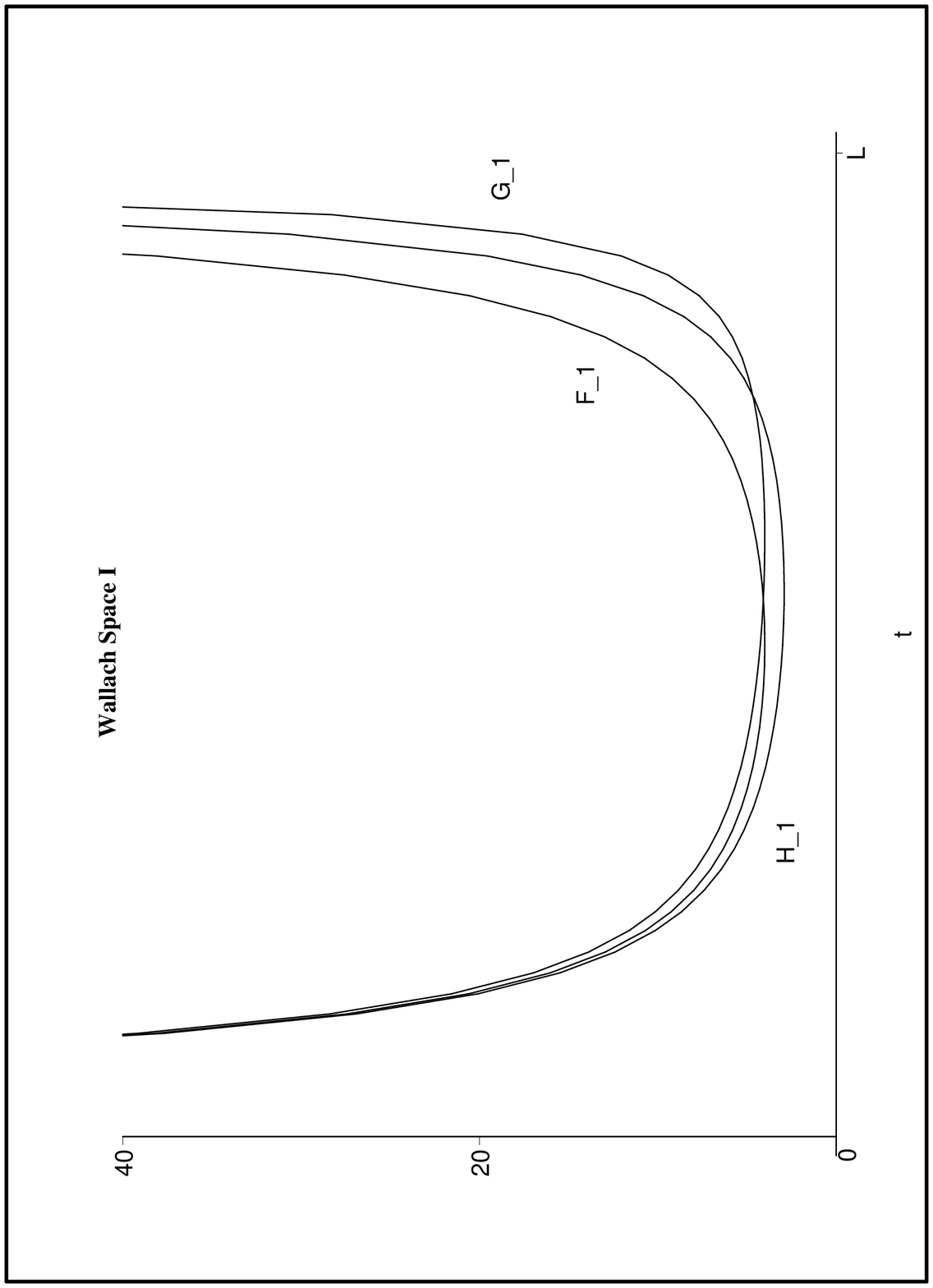}\qquad
\includegraphics[width=3.1in,height=3in,angle=-90]{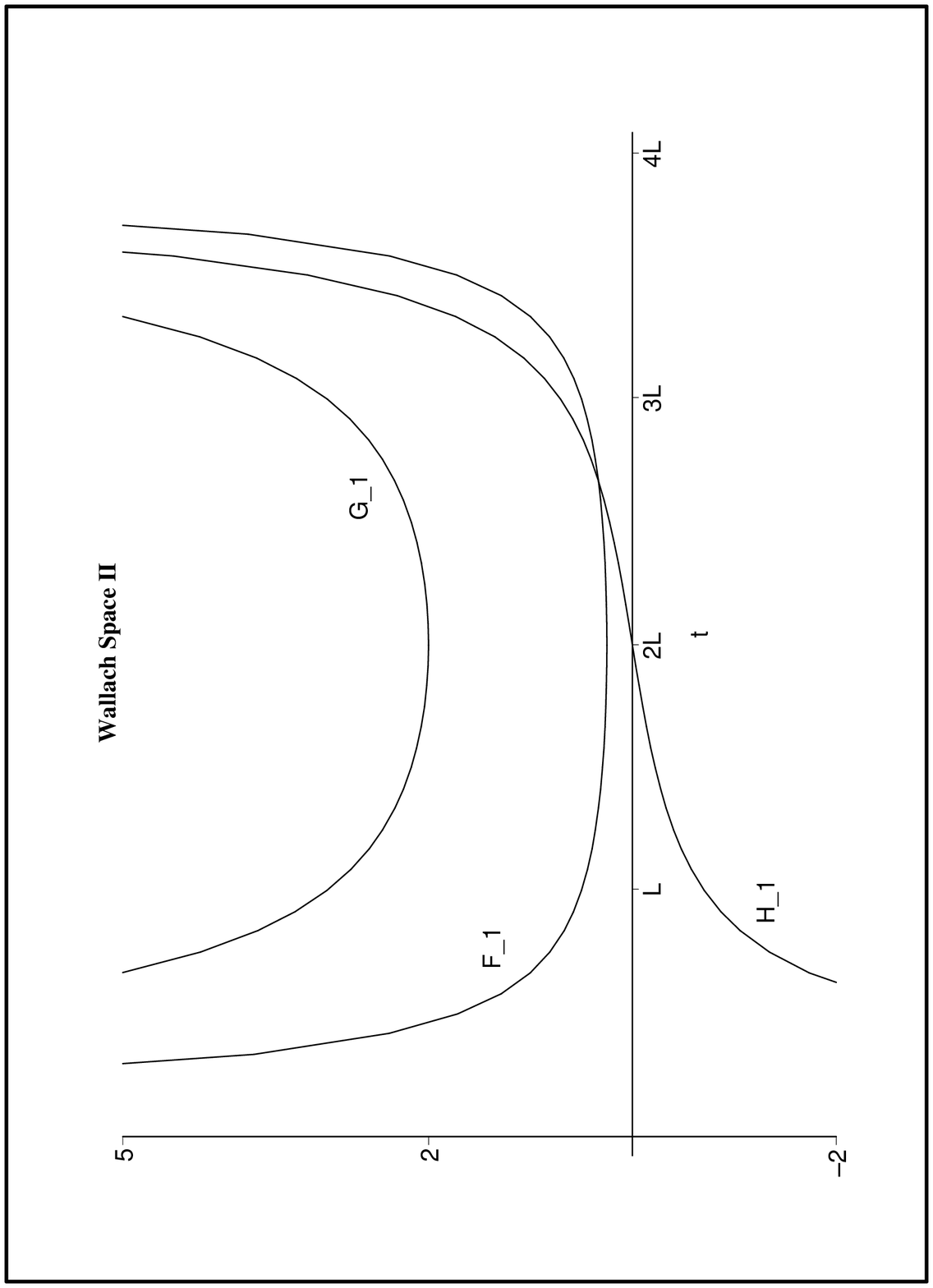}
\end{center}
\caption{Inverse functions for $W^7_{(1)}$  and  $W^7_{(2)}$.}
\end{figure}

\psfrag{f1}{$ \scriptstyle f_1$}\psfrag{f2}{$\scriptstyle
f_2$}\psfrag{f3}{$\scriptstyle f_3$}
\psfrag{sec(T,X1)}{$\scriptstyle
\sec(\gamma',X_1^*)$}\psfrag{sec(T,X2)}{$\scriptstyle
\sec(\gamma',X_2^*)$}\psfrag{sec(T,X3)}{$\scriptstyle
\sec(\gamma',X_3^*)$}
\begin{figure}[h]
\begin{center}
\includegraphics[width=3.1in,height=3in,angle=-90]{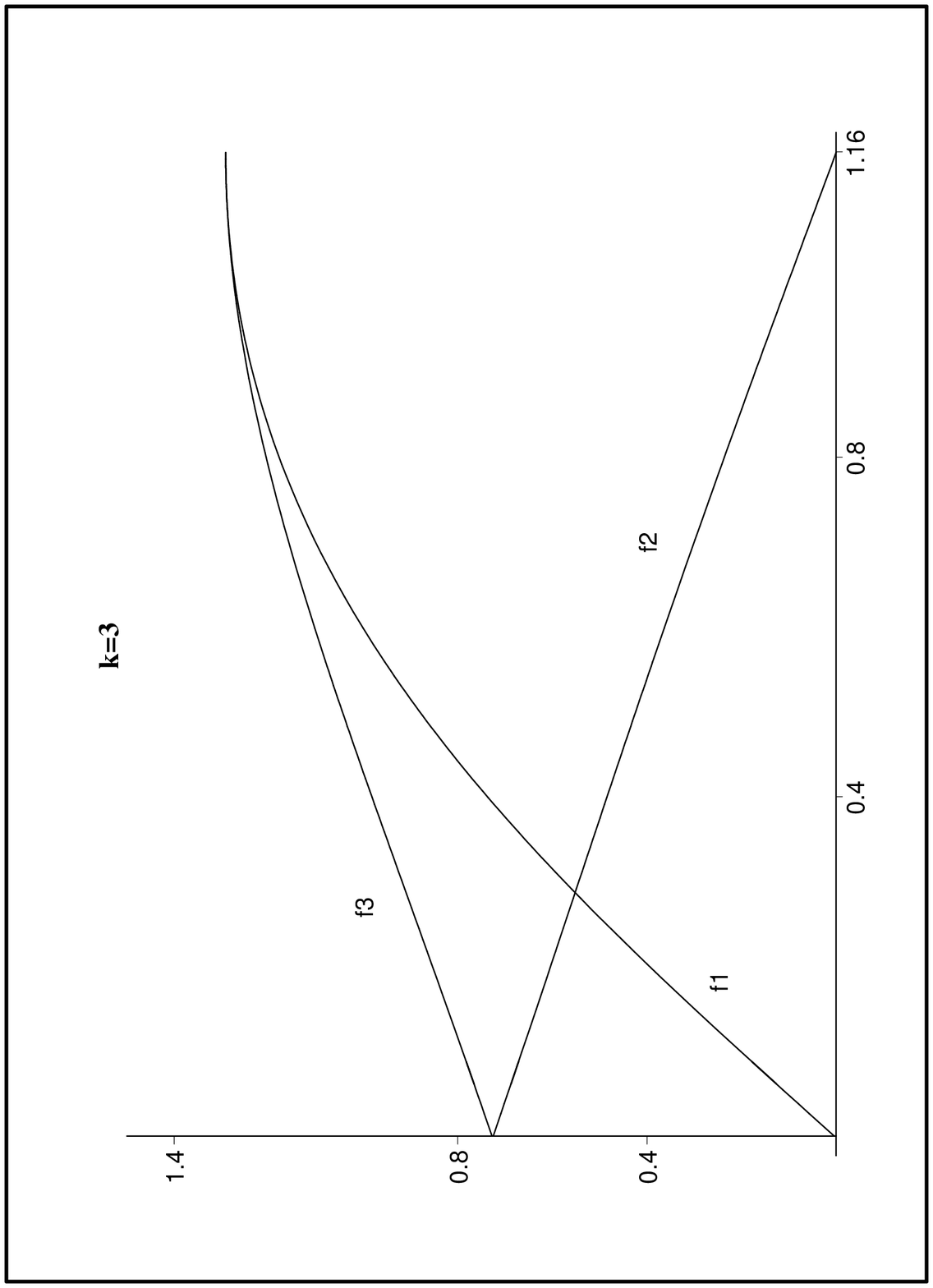}\qquad
\includegraphics[width=3.1in,height=3in,angle=-90]{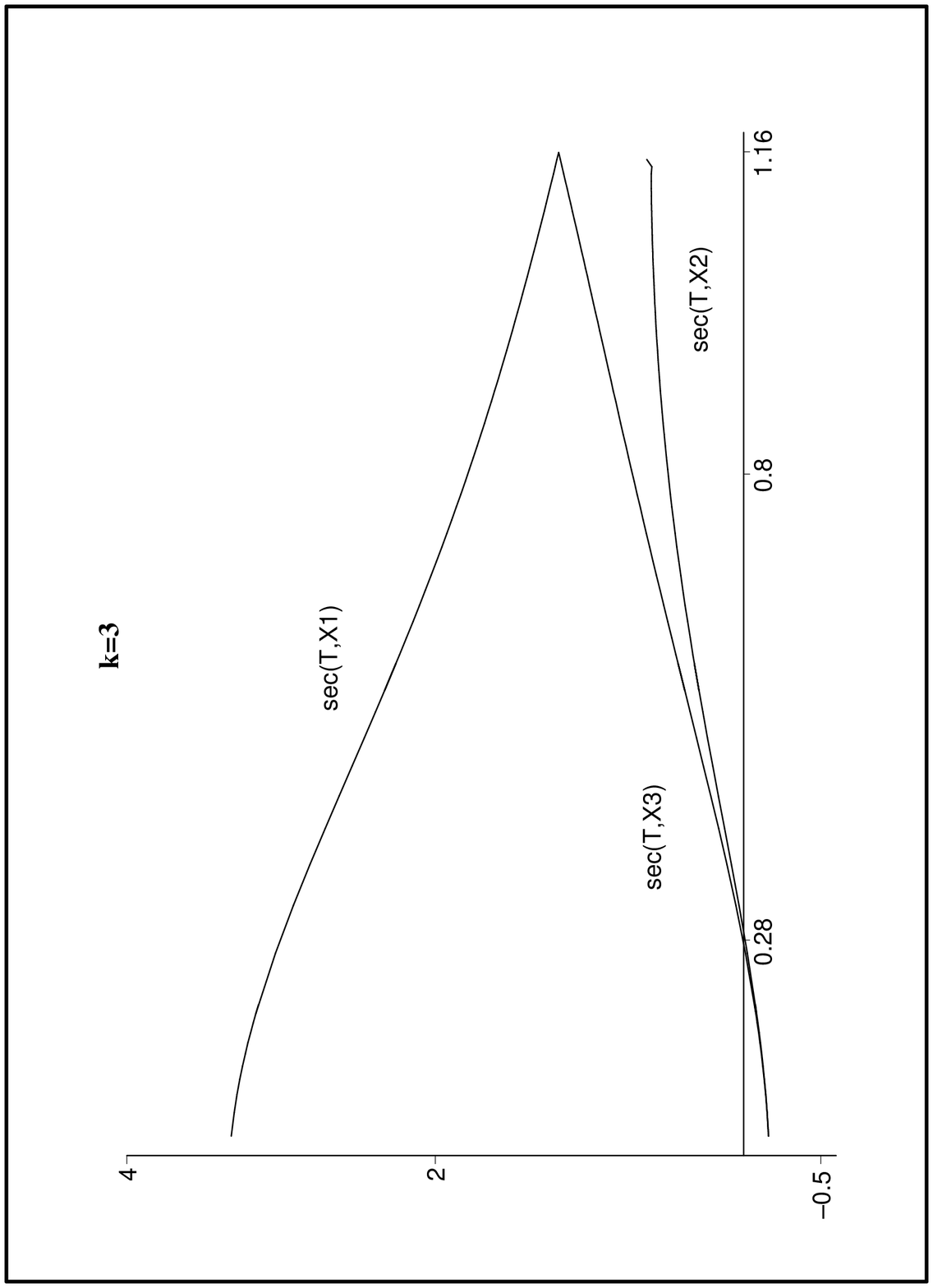}
\end{center}
\caption{Hitchin metric with Normal angle $2\pi/3$.}
\end{figure}

\begin{figure}[h]
\begin{center}
\includegraphics[width=3.1in,height=3in,angle=-90]{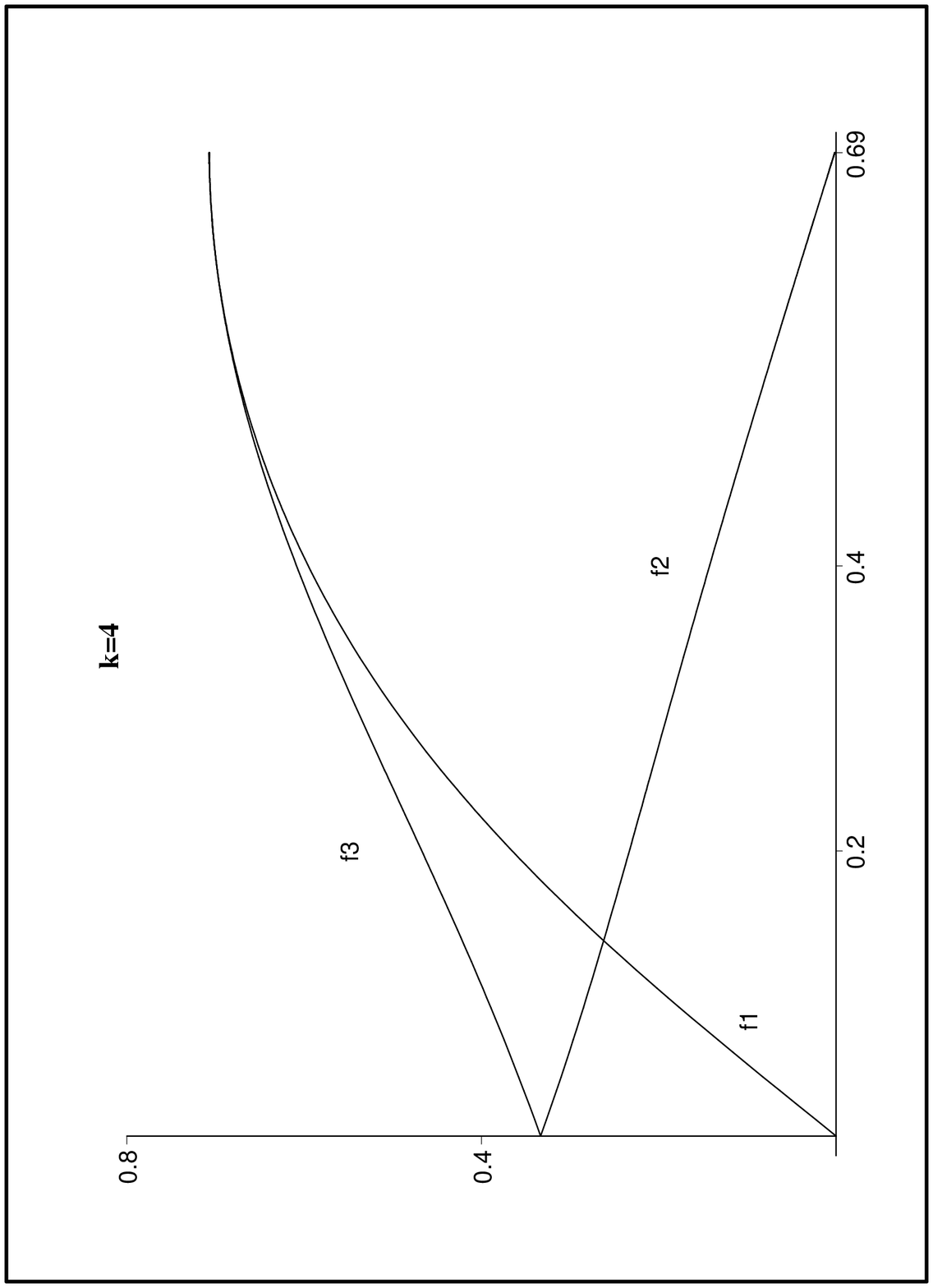}\qquad
\includegraphics[width=3.1in,height=3in,angle=-90]{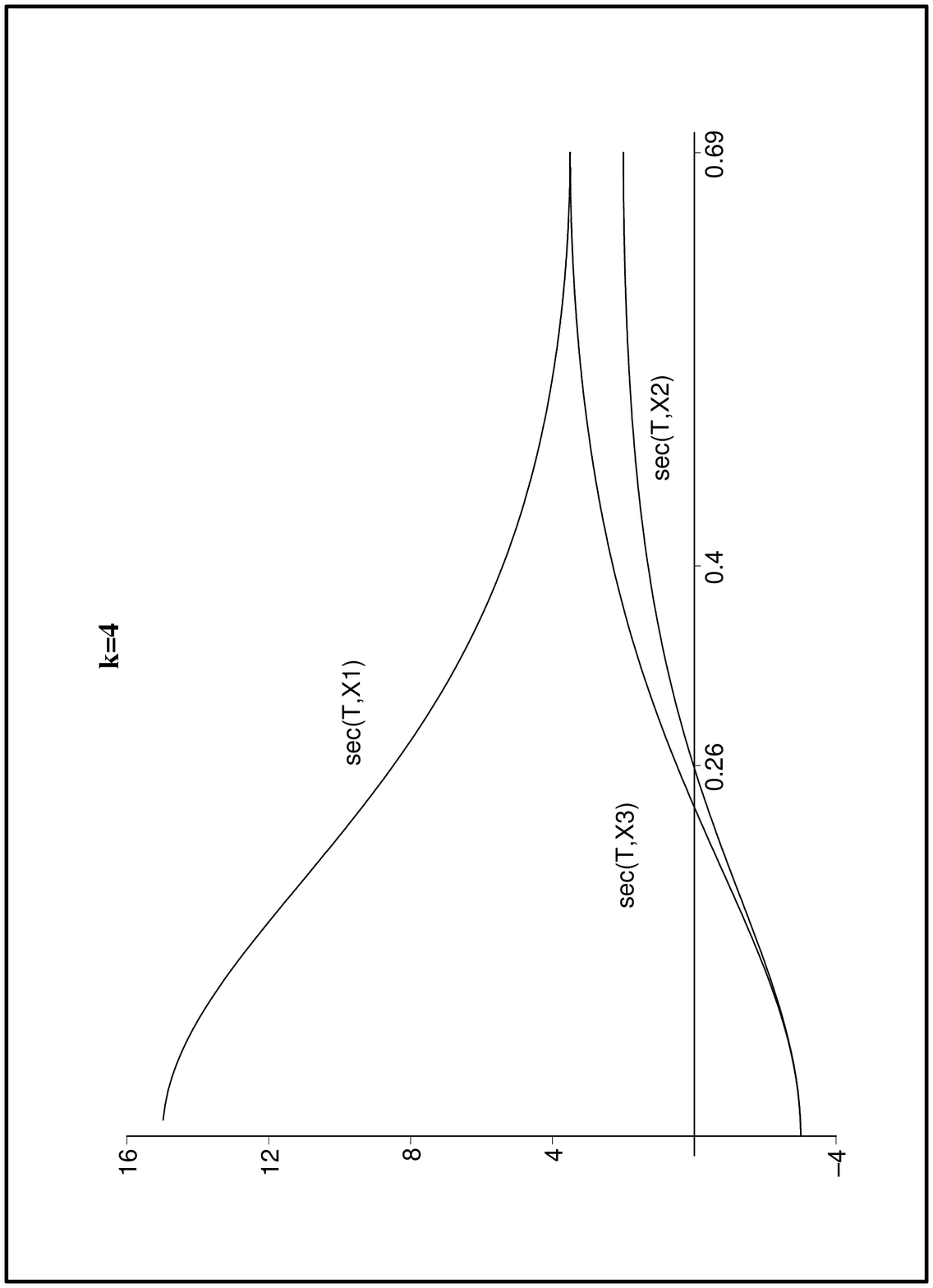}
\end{center}
\caption{Hitchin metric with Normal angle $2\pi/4$.}
\end{figure}

\begin{figure}[h]
\begin{center}
\includegraphics[width=3.1in,height=3in,angle=-90]{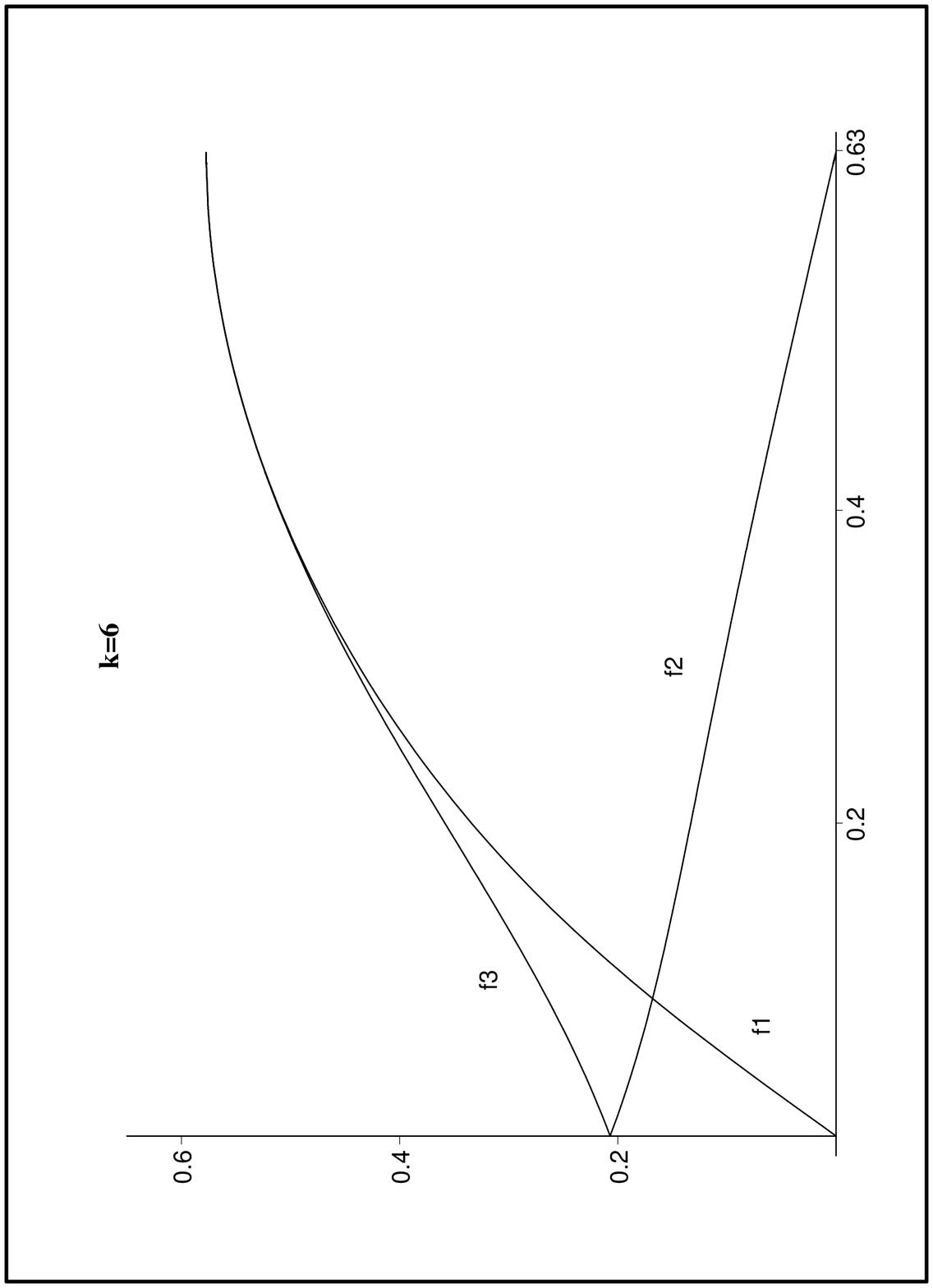}\qquad
\psfrag{X1}{$\scriptstyle X_1$}
\includegraphics[width=3.1in,height=3in,angle=-90]{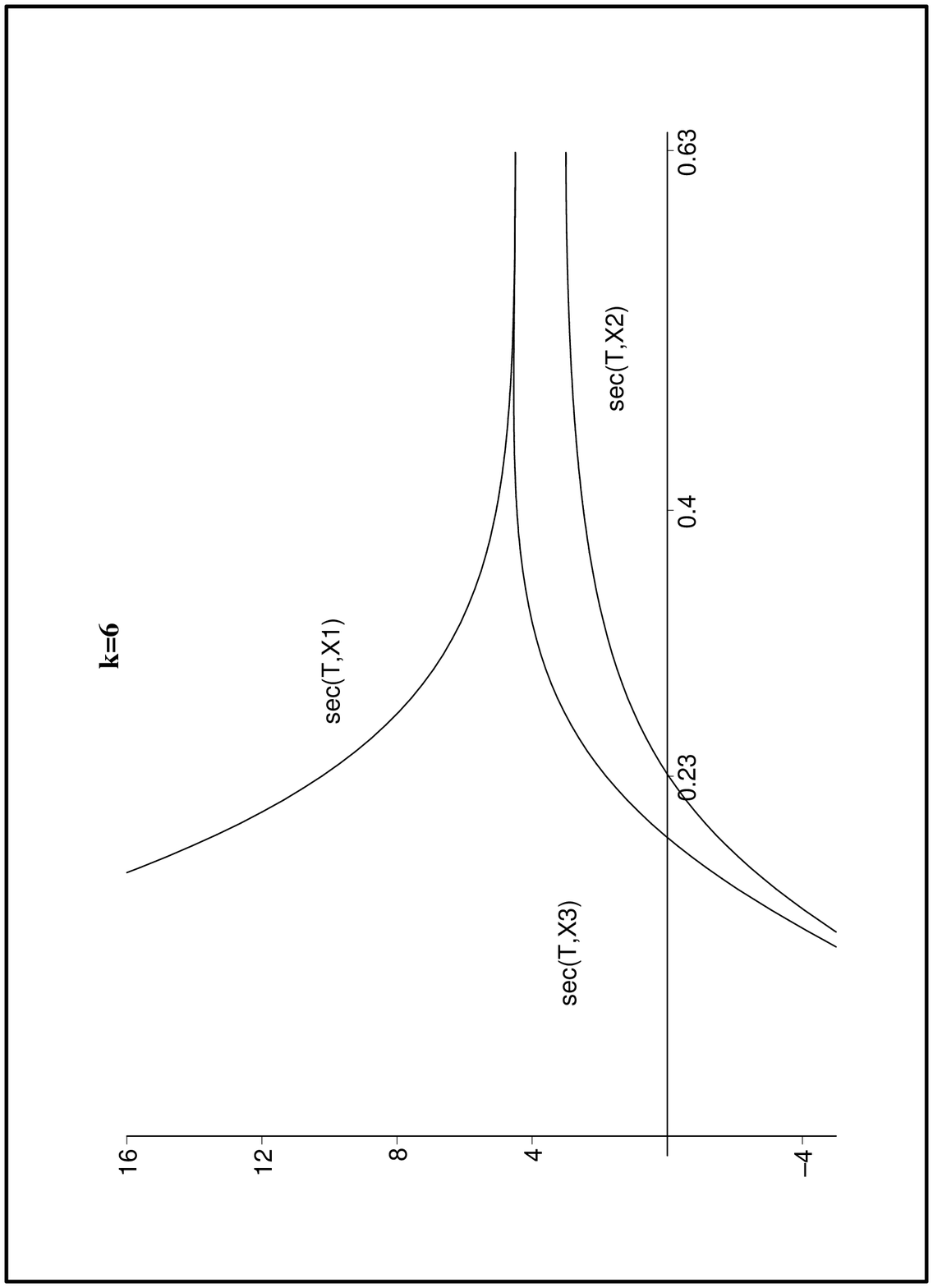}
\end{center}
\caption{Hitchin metric with Normal angle $2\pi/6$.}
\end{figure}

\begin{figure}[h]
\begin{center}
\includegraphics[width=3.1in,height=3in,angle=-90]{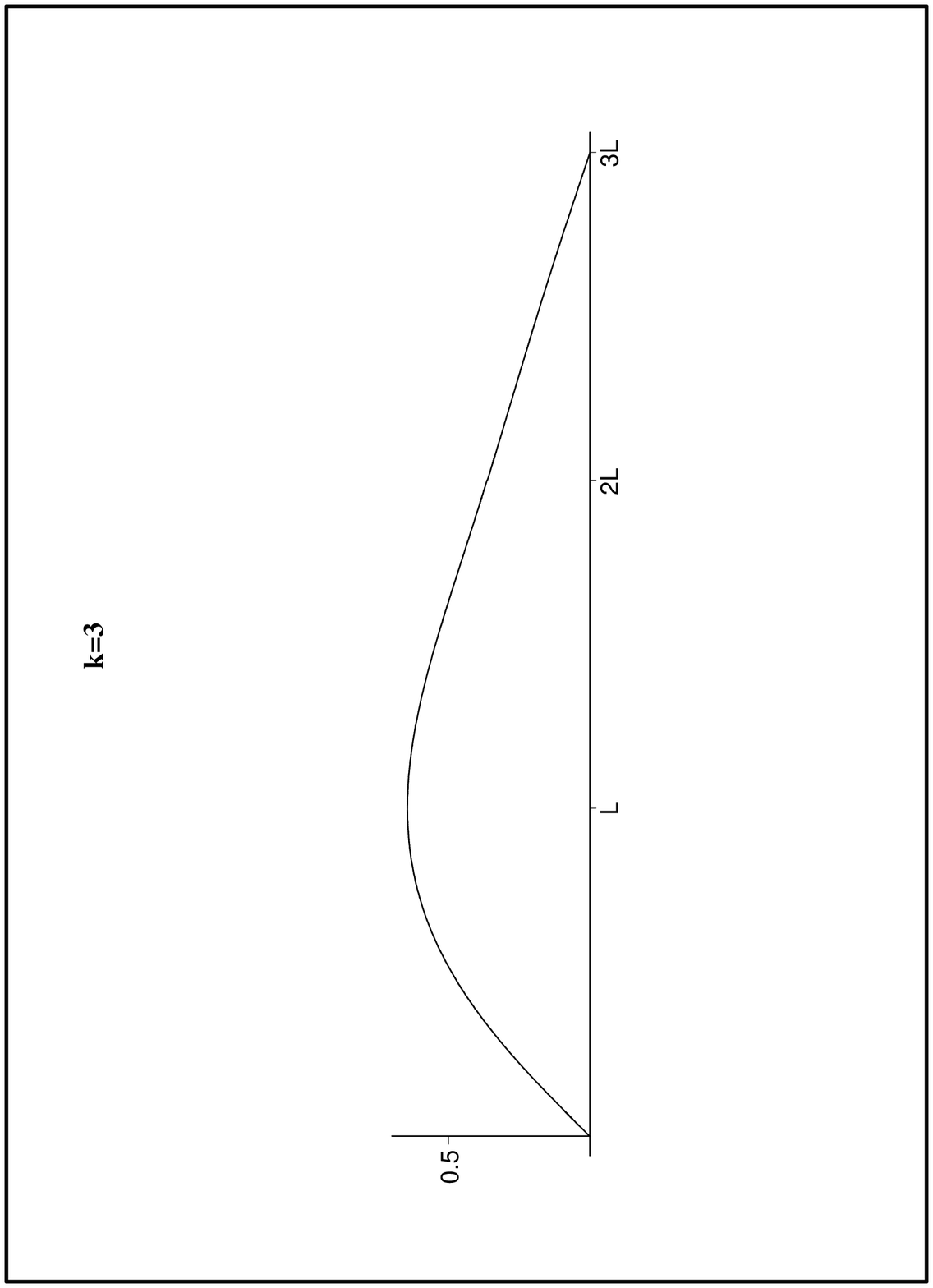}\qquad
\includegraphics[width=3.1in,height=3in,angle=-90]{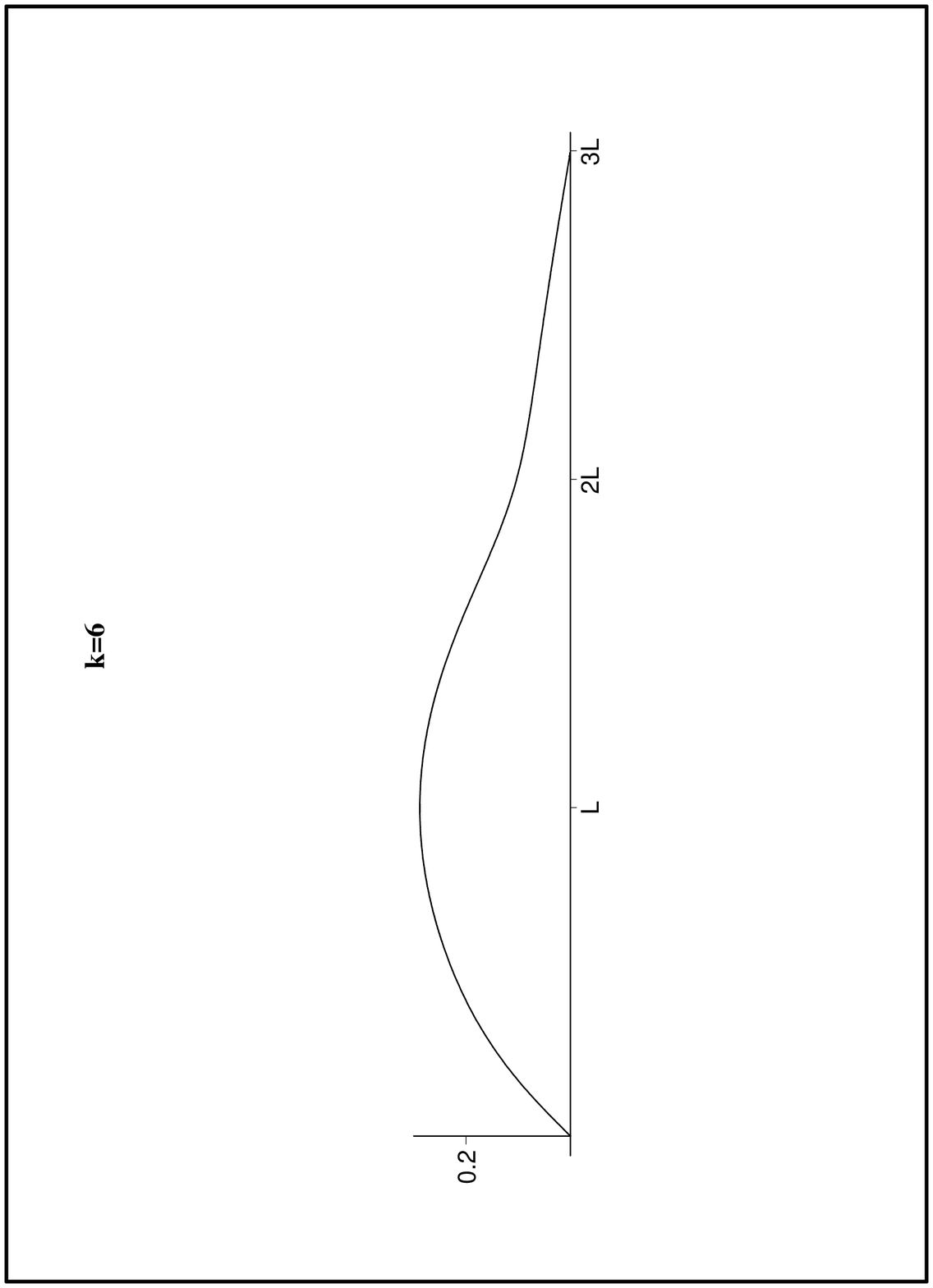}
\end{center}
\caption{Hitchin metrics on $[0,3L]$.}
\end{figure}

\begin{figure}[h]
\begin{center}
\includegraphics[width=3.1in,height=3in,angle=-90]{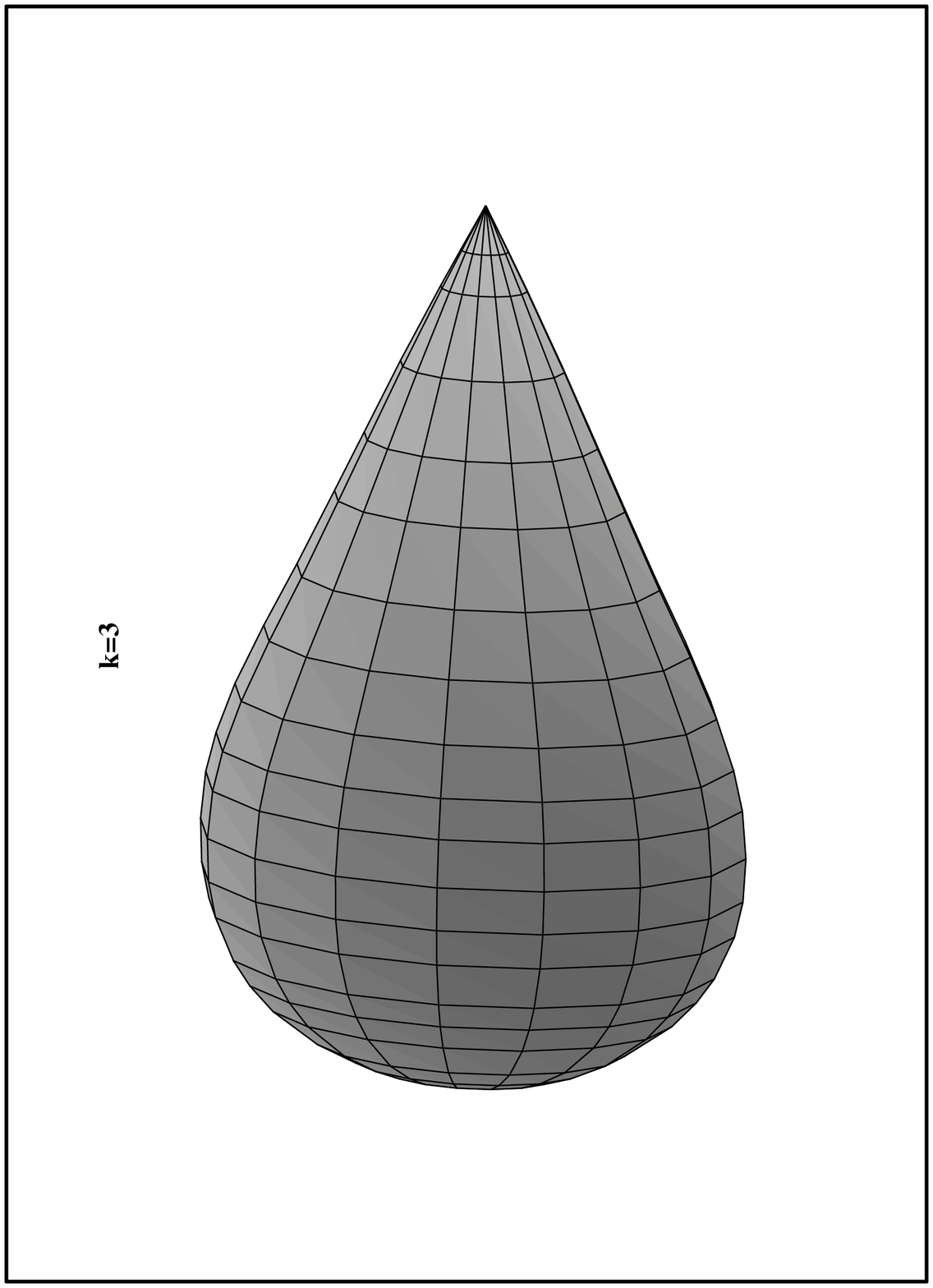}\qquad
\includegraphics[width=3.1in,height=3in,angle=-90]{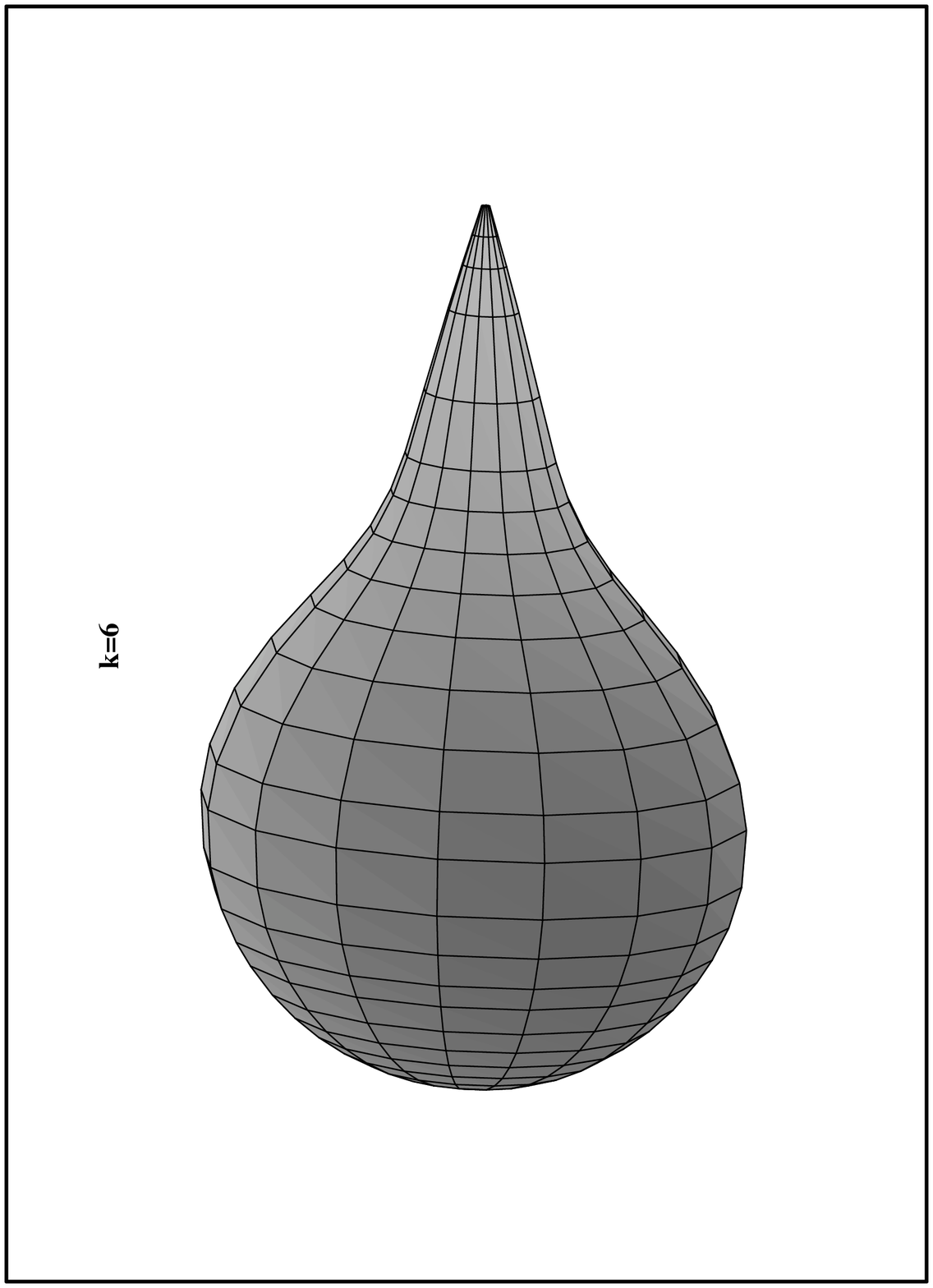}
\end{center}
\caption{Fixed point orbifold $2$-spheres in Hitchin metrics.}
\end{figure}

\end{document}